\documentclass[12pt,a4paper]{amsart}

\usepackage{fullpage}
\usepackage{etex}
\usepackage{amssymb}
\usepackage{amsmath}
\usepackage{amscd}
\usepackage[latin1]{inputenc}
\usepackage{wrapfig}
\usepackage{psfrag}
\usepackage{epsfig}
\usepackage{epic}
\usepackage{eepic}
\usepackage{pifont}
\usepackage{dsfont}
\usepackage{bbold}

\usepackage[usenames,dvipsnames]{color}

\usepackage{hyperref}

\usepackage{caption}

\usepackage{enumitem}

\usepackage{mathrsfs}

\usepackage{subcaption}

\setlength{\belowcaptionskip}{-0.2in}

\usepackage{rotating}
\usepackage{mathtools}

\usepackage{setspace}
\onehalfspacing

\usepackage[section]{algorithm}
\usepackage{algorithmicx,algpseudocode}

\usepackage[all]{xy}

\def \R  {{\mathbb R}}

\def \Z  {{\mathbb Z}}
\def \Q  {{\mathbb Q}}
\def \C  {{\mathbb C}}
\def \N  {{\mathbb N}}

\def \CP {{{\mathbb C}{\mathbb P}}}

\def \CP {{\mathbb C}{\mathbb P}}
\def \t  {{\mathfrak t}}

\def    \PD {\text{PD}}

\newcommand{\F}{{\mathbb{F}}}

\def    \ol {\overline}

\DeclareMathOperator{\grad}{grad}

\DeclareMathOperator \pt {pt}

\newcommand{\acts}{\mathbin{\raisebox{-.5pt}{\reflectbox{\begin{sideways}$\circlearrowleft$\end{sideways}}}}}

\numberwithin{figure}{section}
\numberwithin{table}{section}
\numberwithin{equation}{section}
\swapnumbers


\makeatletter
\let\c@equation\c@figure
\makeatother

\makeatletter
\let\c@table\c@figure
\makeatother

\makeatletter
\let\c@algorithm\c@figure
\makeatother

\newtheorem{Lemma}[equation]{Lemma}
\newtheorem{Theorem}[equation]{Theorem}
\newtheorem*{thm*}{Theorem}

\newtheorem{Question*}{Question}

\newtheorem{Proposition}[equation]{Proposition}
\newtheorem{Corollary}[equation]{Corollary}

\newtheorem*{Lemma*}{Lemma}
\newtheorem*{Corollary*}{Corollary}

\theoremstyle{definition}

\newtheorem{Remark}[equation]{Remark}
\newtheorem{noTitle}[equation]{}

\begin{document}

\title{The Equivariant Cohomology\\ of Complexity One Spaces}

\author{Tara S. Holm}
\address{Department of Mathematics, Cornell University, Ithaca, NY  14853-4201, USA}
\email{tsh@math.cornell.edu}

\author{Liat Kessler}
\address{Department of Mathematics, Physics, and Computer Science, University of Haifa,
at Oranim, Tivon 36006, Israel}
\email{lkessler@math.haifa.ac.il}

\begin{abstract}
Complexity one spaces are an important class of examples in symplectic geometry.
They are less restrictive than toric symplectic manifolds.  
Delzant has established that toric symplectic manifolds
are completely determined by their moment polytope.
Danilov proved that the ordinary and equivariant cohomology rings are dictated by
the combinatorics of this polytope.
These results are not true for complexity one spaces.  
In this paper, we 
describe the equivariant cohomology for a
Hamiltonian $S^1\acts M^4$. 
We then assemble the equivariant cohomology of a complexity one space from the equivariant cohomology of the $2-$ and $4$-dimensional pieces, as a subring of the equivariant cohomology of its fixed points.
We also show how to
compute equivariant characteristic classes in dimension four.
\end{abstract}

\maketitle

\section{Introduction}

A symplectic action of a torus $T=(S^1)^k$ on a  symplectic manifold 
$(M,\omega)$ is called {\bf Hamiltonian} if it admits a {\bf momentum map}.
That is, we have a smooth map $\Phi \colon M \to {\t}^{*}\simeq {\R}^k$ such that  $ d \Phi_j =  -\iota(\xi_j) \omega $
for every $j=1,\dots,k$, where $\xi_1,\ldots,\xi_k$ are the vector fields
that generate the torus action.
If the Hamiltonian action is effective, the triple $(M,\omega,\Phi)$ is called a {\bf Hamiltonian $T$-space}.
In this paper, we will consider closed and connected Hamiltonian $T$-spaces.
In a Hamiltonian $T$-space, the vector fields $\xi_1,\ldots,\xi_k$ 
define an isotropic subbundle of the tangent bundle, so we must have
$\dim(T)\leq \frac{1}{2}\dim(M).$
When we have the equality $\dim(T)= \frac{1}{2}\dim(M)$, 
the action is called {\bf toric}, and the space is called a \textbf{toric symplectic manifold}.  Delzant has shown that closed connected toric symplectic
manifolds, up to equivariant symplectomorphism, 
are in one-to-one correspondence with simple, rational, smooth polytopes,
up to affine equivalence.

 More generally, an effective Hamiltonian action
$(S^1)^{n-k}\acts M^{2n}$  has {\bf complexity $k$}.  Hence,
a {\bf complexity one space} is a symplectic manifold equipped with an effective
Hamiltonian action of a torus which is one dimension
less than half the dimension of the manifold.  
The only 
example of an effective Hamiltonian action on a two-dimensional closed connected $M$ is
a linear action $S^1\acts S^2$, which is toric.
When $M$ is four-dimensional, the only tori that act effectively are $S^1$ 
and $S^1\times S^1$. Thus examples of Hamiltonian circle actions $S^1\acts M^4$ 
are the first examples of complexity one spaces.

Already for $S^1\acts M^4$, we see that an analogue of Delzant's theorem is impossible.
Delzant's theorem says that $T^2\acts M^4$ correspond to a class of polygons in $\R^2$.  
Restricting to a circle, we have moment image a single interval: we've lost too much information to be
able to recover any topological information about $M$. Nevertheless, the existence of an effective Hamiltonian circle
action on a compact connected symplectic $4$-manifold does have topological implications.  Li has shown \cite{Li:2006} that for
an effective Hamiltonian circle action, the fundamental group $\pi_1(M)$ must be isomorphic to  the fundamental group
$\pi_1(\Sigma_{min})$ of the submanifold $\Sigma_{min}$ on which the moment map $\Phi$ achieves its minimum.
This minimum is automatically a symplectic submanifold, so it must be an isolated point or an oriented surface.
On the other hand, thanks to a construction of Gompf \cite{Gompf:1994}, any finitely presented group can be the 
fundamental group of a compact symplectic $4$-manifold.  Thus, the existence of an effective Hamiltonian circle
action restricts the topology of $M$ considerably.

In general, Hamiltonian $T$-spaces enjoy a number of useful features when it comes to computational
topology.  Components of the momentum map are Morse functions (in the sense of Bott).  Thus, topological
invariants like singular cohomology are amenable to computation for these spaces \cite{frankel:1959}.
More subtly, it is often possible to compute {\bf equivariant cohomology}, an invariant depending on both the
manifold and the action. 
As the critical set for a generic component of the momentum map, the set $M^T$ of fixed points plays a leading role
in these calculations.    
For an effective Hamiltonian action $S^1\acts M^4$ with only isolated fixed points, Goldin and the
first author \cite{Goldin-Holm:2001} use the Atiyah-Bott/Berline-Vergne (ABBV) localization formula  \cite{AB:1984, BV:1982} to
describe 
the equivariant cohomology $H_{S^1}^*(M;\Q)$.
In this case, the $S^1$-action extends to a toric
action $T^2\acts M^4$. 
In general, an effective Hamiltonian $S^1$-action on a four-manifold
might fix two-dimensional submanifolds and it need not extend to a toric action.
We 
review the relevant facts on equivariant cohomology in Section~\ref{se:eq coh}.

The first main result of this manuscript describes the $S^1$-equivariant cohomology
for any effective Hamiltonian $S^1$-action on a symplectic four-manifold. 
Examples include $k$-fold blowups of symplectic ruled surfaces of positive genus. 
This is a rare instance
in the symplectic category where the presence of odd degree cohomology doesn't
make calculations in equivariant cohomology impossible.  
It is also the first occurrence of calculations with
fixed point components of different diffeomorphism types.

\begin{Theorem}\label{thm:S1onM4}
Let $M$ be a closed connected symplectic four-manifold, and $S^1\acts M$ be an effective Hamiltonian circle action.
\begin{enumerate}
\item[(A)]
The equivariant cohomology $H_{S^1}^*(M;\Z)$ is a free $H_{S^1}^*(pt;\Z)$-module
isomorphic to $$H^*(M;\Z)\otimes H_{S^1}^*(pt;\Z).$$
More precisely,  if $F_1,\dots, F_s$ are the (finitely many) components of the fixed set $M^{S^1}$, 
then there are even natural numbers $\lambda_1,\dots,\lambda_s$ such that
$$
H_{S^1}^*(M;\Z) = \bigoplus_j H^{*-\lambda_j}(F_j;\Z)\otimes H_{S^1}^*(pt;\Z).
$$

\item[(B)]
The inclusion $i:M^{S^1}\hookrightarrow M$ induces an injection in 
integral equivariant cohomology
$$
i^*:H^*_{S^1}(M;\Z) \hookrightarrow H^*_{S^1}\left(M^{S^1};\Z\right).
$$

\item[(C)]
In equivariant cohomology with rational coefficients, the image of $i^*$ is characterized
as those classes  $\displaystyle{\alpha\in H^*_{S^1}\left(M^{S^1};\Q\right) 
= \bigoplus_{F\subset M^{S^1}}H^*_{S^1}\Big(F;\Q\Big) }$ which satisfy:
\begin{enumerate}
\item[(0)] that the degree zero terms $\alpha^{(0)}|_F$ are all equal;
\item[(1)] that the degree one terms $\alpha^{(1)}|_\Sigma$ restricted to fixed surfaces
are equal; and
\item[(2)] the ABBV relation
\begin{equation}\label{eq:relations}
 \sum_{{F\subset M^{S^1}}} \left(\pi|_F\right)^{!}\left( \frac{ \alpha|_F}{e_{S^1}(\nu(F\subseteq M))}\right) \in \Q[u]=H_{S^1}^*(\pt;\Q),
\end{equation}
where the sum is taken over the connected components $F$ of the fixed point set $M^{S^1}$, 
 $\alpha|_F$ is the restriction of $\alpha$ to the component $F$,  the map $\left({\pi|_F}\right)^{!}$ is the
equivariant pushforward of $\pi|_{F} \colon F \to {\pt}$, and  $e_{S^1}(\nu(F\subseteq M))$ is the equivariant Euler class of the normal bundle of $F$.

\end{enumerate}
\item[(D)] The Atiyah-Bredon sequence
for $S^1\acts M$
is exact over $\Z$.
\item[(E)]
We have a short exact sequence
$$0 \to \langle \pi^{*}(u) \rangle \to  H_{S^1}^{*}(M;\Z) \to H^{*}(M;\Z) \to 0.$$
\end{enumerate}
\end{Theorem}

For Parts (A) and (B), 
we adapt Morse theoretic arguments of Tolman and Weitsman
 \cite{TW-hamTsp:1999} over $\Q$ to show they work for our class of manifolds when the coefficients are the integers.
 Franz and Puppe have also adapated this argument to work over $\Z$  (see  \cite[Proof of Theorem 5.1]{Franz-Puppe:2007}),
 but they require the action to have connected stabilizer groups, which we do not have.
 By contrast, we do have torsion-free fixed point components, which allows us to draw the same conclusion.
 We then use Franz and Puppe's work to verify that (D) and (E) hold for these 4-manifolds.

Turning to part (C) of the main theorem,
we exhibit a sample class satisfying the requirements listed in (C) in Figure~\ref{fig:sample class}.
We prove this part using
Morse theory to compute the equivariant Poincar\'e polynomials
$P_{S^1}^{M^{S^1}}(t)$ and $P_{S^1}^M(t)$ and their difference. 
This tells us the ranks of 
$i^{*}(H_{S^1}^*(M;\Q))$.

To interpret the ABBV relation, we calculate explicitly equivariant Euler classes and their inverses. 
In Section \ref{se:new} we give formulas for 
these classes, 
and for equivariant 
Chern classes, in terms of the weights of the action at the fixed points and the self intersection of the fixed 
surfaces. 
In the appendix we apply the ABBV relation and our calculations to compute the intersection numbers of embedded invariant surfaces.

\begin{figure}[h]
\centering
\includegraphics[height=4cm]{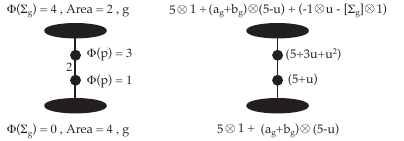} 
\caption[.]{{
On the left  is the decorated graph for a circle action on the manifold 
$M=[\C P^2_{\mathbf 4} \times (\Sigma_g)_{\mathbf{4}}]\# \ol{{\C P}}^2_{\mathbf{2}}\# \ol{{\C P}}^2_{\mathbf{1}}$
and on the right a collection
of classes in $H^*_{S^1}(F)$ for each fixed component $F$.
These classes satisfy the requirements in Theorem~\ref{thm:S1onM4},
so they are the restrictions to the fixed sets of a global class in
$H_{S^1}^*(M;\Q)$. 
}}
\label{fig:sample class}
\end{figure}

In the second main result of the manuscript, Corollary~\ref{thm:complexity one}, we  generalize Theorem \ref{thm:S1onM4}, applying 
a theorem of Tolman and Weitsman \cite{TW-hamTsp:1999} to assemble 
the equivariant cohomology of a complexity one space, 
$T^{n-1}\acts M^{2n}$.
Tolman and Weitsman's work is a consequence of an earlier result of Chang and Skjelbred \cite{CS:1974} but the
Tolman-Weitsman proof illuminates the type of geometric argument that we use in the Hamiltonian setting.
Our main results demonstrate how amenable complexity one spaces
are to algebraic computation.
It opens the door to questions about the geometric data encoded
in the equivariant cohomology ring for complexity one spaces,
along the lines of Masuda's work \cite{Masuda:2008}, distinguishing toric manifolds by their equivariant cohomology.
 
 \bigskip

\noindent {\bf Acknowledgements.}
We would like to thank Yael Karshon for helpful conversations about complexity
one spaces, and we are grateful for the hospitality of the Bar Natan Karshon Hostel
during the completion of our manuscript.  The first author was supported in part
by the National Science Foundation under Grant DMS--1206466. Any opinions, findings, and conclusions or recommendations expressed in this material are those of the authors and do
not necessarily reflect the views of the National Science Foundation.

\section{Hamiltonian circle actions on $4$-manifolds}\label{se:dec graphs}

Let $(M^4,\omega)$ be a closed connected symplectic four-manifold with an effective Hamiltonian $S^1$-action.
The real-valued momentum map
$\Phi: M\to \R$ is a Morse-Bott function with
critical set corresponding to the fixed points  \cite[\S 32]{GS-book:1984}.
Since $M$ is four-dimensional, the critical set can only consist
of isolated points and two-dimensional submanifolds.  
The latter
can only occur at the extrema of $\Phi$.  The maximum and minimum of the momentum map is each attained on exactly one component of the fixed point set. This is the key point for our computations below.

To $(M,\omega,\Phi)$ Karshon associates a {\bf decorated graph}. The decorated graphs
for two different circle actions on  $\C P^2$ are shown in Figure~\ref{fig:dec graph}.
\begin{figure}[h]
\centering
\includegraphics[width=10cm]{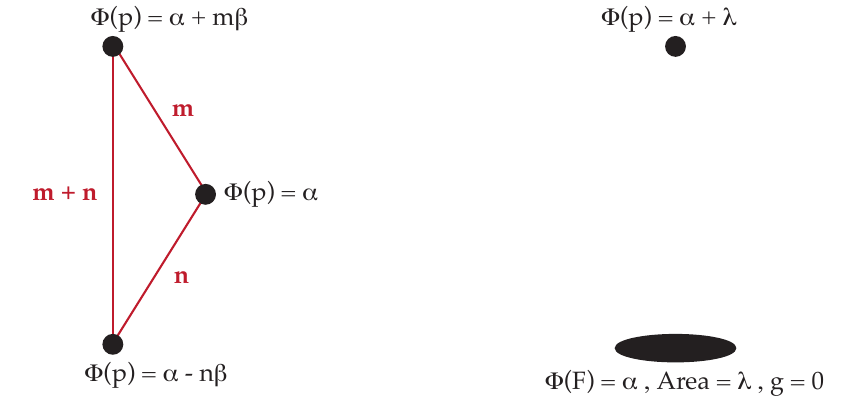} 
\caption[.]{The decorated graphs for $S^1\acts \C P^2$: with $3$ isolated fixed points 
on the left; and with
a fixed surface and an isolated fixed point on the right. An edge labelled $\ell$ indicates a $\Z_\ell$-sphere containing the two fixed points.}
\label{fig:dec graph}
 \end{figure}
  Two Hamiltonian $S^1$-spaces are isomorphic if and only if they have the same decorated graph \cite{Karshon:1999}. 
  Moreover, we know which such spaces occur \cite{Karshon:1999}.
 In particular, when the fixed points of the action $S^1\acts M^4$ are isolated, the $S^1$-action extends to a toric
action $T^2\acts M^4$.  If there is a single critical surface $\Sigma$, then we may deduce that $\Sigma$ has genus $0$.
When there are two fixed surfaces $\Sigma_{min}$ and $\Sigma_{max}$, 
they must have the same genus, so are each homeomorphic
to a fixed surface $\Sigma$.   
We call the case when there are two fixed surfaces of genus $g>0$ 
the {\bf positive genus} case.

\section{Equivariant cohomology}\label{se:eq coh}

In this paper, we are interested in computing the (Borel) equivariant 
cohomology of a complexity one space.
Recall that for a circle action on a manifold $M$
$$
H_{S^1}^*(M;R) := H^*((M\times ES^1)/S^1; R),
$$
where the classifying bundle 
$ES^1 := S^{\infty}$ is the unit sphere in an infinite
dimensional complex Hilbert space $\C^\infty$,  equipped with a free $S^1$-action by
coordinate multiplication, 
$S^1\acts (M\times ES^1)$ diagonally, and $R$ is the 
coefficient ring.  The classifying space is $BS^1 = ES^1/S^1 = \C P^\infty$.
The equivariant cohomology of a point is
\begin{equation} \label{eqhs1pt}
H^*_{S^1}(\pt; R) = H^*(BS^1;R) = H^*(\C P^\infty;R) = R[u],
\end{equation}
where $\deg(u) =2$.

\begin{Remark} \label{finiteappro}
We can interpret $ES^1=S^{\infty}$ as the direct limit of odd-dimensional spheres $S^{2\ell+1} \subset {\C}^{\ell+1}$ with respect to the natural inclusions, and $BS^1=\CP^{\infty}= \lim \limits_{\longrightarrow} {\CP^\ell}$.
Then $(M \times ES^1) / S^1$ is a direct limit of $(M \times S^{2\ell+1})/ S^1 $. For every degree $q$ we have 
$H_{S^1}^{q}(M)=H^{q}((M \times S^{2\ell+1})/ S^1;\Z)$ for all sufficiently large $\ell$. 
\end{Remark}

More generally for a torus $T = (S^1)^k$, we have 
$
H_{T^k}^*(M;R) := H^*((M\times (ES^1)^k)/T^k; R).
$ 
The inclusion of the fixed point set $i:M^T\hookrightarrow M$ is an equivariant map, and 
Borel studied the induced map in equivariant 
cohomology, using field coefficients.
\begin{Theorem}[{Borel \cite{Borel:1960}}] \label{thm:ker coker}
Let a torus $T$ act on a closed manifold $M$ and let $\F$ be a field of characteristic zero.
In equivariant cohomology, the kernel and cokernel of the map induced by inclusion,
$$
i^*:H_T^*(M;\F)\to H_T^*(M^T;\F)
$$
are torsion submodules.  In particular, if $H_T^*(M;\F)$ is torsion free as a module over $H_T^*(\pt;\F)$, then $i^*$ is
injective.
\end{Theorem}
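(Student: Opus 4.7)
The plan is to reduce the statement to the vanishing, after suitable localization, of the equivariant cohomology of fixed-point-free $T$-spaces, and then establish this vanishing by a Mayer-Vietoris argument over orbit types.

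First I translate the claim about $i^*$ into a localization statement: saying that $\ker(i^*)$ and $\coker(i^*)$ are torsion over $S := H_T^*(pt;R)$ is equivalent to saying that after inverting the multiplicative set $\mathcal{T} \subseteq S$ of non-zero-divisors, $i^*$ becomes an isomorphism. Using the equivariant tubular neighborhood theorem, choose a $T$-invariant open neighborhood $U$ of $M^T$ that equivariantly deformation retracts onto $M^T$, together with a smaller invariant open neighborhood $U' \subset U$ with $\overline{U'} \subset U$, and set $V := M \setminus \overline{U'}$. Then $\{U, V\}$ is an open $T$-invariant cover of $M$, and by construction both $V$ and $U \cap V$ have empty $T$-fixed set. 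The Mayer-Vietoris sequence
$$\cdots \to H_T^k(M;R) \to H_T^k(U;R) \oplus H_T^k(V;R) \to H_T^k(U \cap V; R) \to \cdots,$$
together with the identification $H_T^*(U;R) \cong H_T^*(M^T;R)$, reduces the claim to showing that $H_T^*(V;R)$ and $H_T^*(U \cap V;R)$ are annihilated by some element of $\mathcal{T}$.

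The core statement to prove is therefore: if $X$ is a compact $T$-space with $X^T = \emptyset$, then every element of $H_T^*(X;R)$ is annihilated by some non-zero-divisor in $S = R[u_1,\ldots,u_k]$. I would cover $X$ by finitely many $T$-invariant open sets $W_1, \ldots, W_N$ supplied by the slice theorem, each equivariantly retracting onto a single orbit $T/H_\alpha$ with $H_\alpha \subsetneq T$ a proper closed subgroup. For such an orbit, $H_T^*(T/H_\alpha; R) \cong H_{H_\alpha}^*(pt;R)$, and the restriction map $S \to H_{H_\alpha}^*(pt;R)$ has in its kernel a nonzero homogeneous polynomial: take any character $\chi : T \to S^1$ vanishing on the identity component $H_\alpha^\circ$, and pull back the generator of $H^2(BS^1;R)$; this yields a degree-two linear polynomial in $S$ that is a non-zero-divisor. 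A suitable power (to absorb the finite component group $H_\alpha/H_\alpha^\circ$) annihilates $H_T^*(W_\alpha;R)$. Iterating Mayer-Vietoris over the cover, if one group is annihilated by $p \in \mathcal{T}$ and another by $q \in \mathcal{T}$, then the middle term in their Mayer-Vietoris sequence is annihilated by $pq$, and a finite product of non-zero-divisors in $S$ is again a non-zero-divisor.

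The main obstacle is the bookkeeping in this last step when $R$ is not a field of characteristic zero: one must verify that the chosen annihilators are genuinely non-zero-divisors in $R[u_1,\ldots,u_k]$, which is immediate for positive-degree elements when $R$ is an integral domain but requires more care in general, and one must handle the finite component groups $H_\alpha/H_\alpha^\circ$ via transfer or by passing to a power. Once these details are in place, the Mayer-Vietoris induction produces a universal element of $\mathcal{T}$ that kills $H_T^*(V;R)$ and $H_T^*(U \cap V;R)$, and the reduction in Step~1 then yields the torsion statement for both $\ker(i^*)$ and $\coker(i^*)$; the ``in particular'' clause is immediate, since a torsion submodule of a torsion-free module is zero.
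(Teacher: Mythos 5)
The paper does not actually prove this statement---it is quoted from Borel's seminar---so there is no internal argument to compare against. Your outline is the standard localization proof from the literature (invariant tubular neighborhood of $M^T$, Mayer--Vietoris, slice theorem, and the kernel of $H_T^*(pt;R)\to H_{H}^*(pt;R)$ for proper closed subgroups $H$), and the reduction in your first two steps is sound. Two points, however, are genuinely wrong as written.

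First, ``a suitable power \dots annihilates $H_T^*(W_\alpha;R)$'' does not work. If $H_\alpha$ has component group of order $N$, the restriction of $c_1(\chi)$ to $H^*(BH_\alpha;\Z)$ is an $N$-torsion class $y$ with $y^k\neq 0$ for all $k$ (for instance $H^{2k}(B(\Z/N);\Z)\cong\Z/N$ is generated by $y^k$), so no power of $c_1(\chi)$ restricts to zero. What you need is a \emph{multiple}: replace $\chi$ by $\chi^N$, which is trivial on all of $H_\alpha$, so that $N\,c_1(\chi)=c_1(\chi^N)$ dies on restriction. This also pins down where the coefficient ring enters: $N\,c_1(\chi)$ is a non-zero-divisor in $R[u_1,\dots,u_k]$ for $R=\Z$ or $R=\Q$ (the only cases the paper uses), but not for arbitrary $R$, so some hypothesis on $R$ is unavoidable.

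Second, your justification of the final clause is a non sequitur: $\ker(i^*)$ is a submodule of $H_T^*(M;R)$, not of $H_T^*(M^T;R)$, so torsion-freeness of the latter does not force a torsion kernel to vanish. Indeed the clause is not a formal consequence of the localization statement: for a free action one has $M^T=\emptyset$, so $H_T^*(M^T;R)=0$ is (vacuously) torsion-free, yet $i^*$ kills everything. In the setting where the paper actually applies the clause---a Hamiltonian action whose fixed set has torsion-free cohomology---injectivity over $\Z$ is the integral Kirwan injectivity theorem: one uses the momentum map as a Morse--Bott function, checks that the equivariant Euler classes of the negative normal bundles at the critical sets are non-zero-divisors, and deduces that the long exact sequences of the Morse strata split, so that $H_T^*(M;\Z)$ is free and injects into $H_T^*(M^T;\Z)$. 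You would need to supply that argument (or restrict the ``in particular'' to a setting where $H_T^*(M;R)$ itself is known to be torsion-free); it does not follow from the torsion statement you proved.
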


The map
\begin{equation} \label{eq:pi1}
\pi:M\to \pt
\end{equation} induces a map
in equivariant cohomology $\pi^*:H_T^*(\pt;R)\to H_T^*(M;R)$ which endows
$H_T^*(M;R)$ with an $H_T^*(\pt;R)$-module structure. 
The projection \eqref{eq:pi1} induces a fibration
$$(M\times E{T})/{T} \to ET/T=BT.$$
In the context of Hamiltonian torus actions, Kirwan studied 
the $H_T^*(\pt;R)$-module structure for coefficient rings 
which are fields of characteristic $0$.  
Kirwan proved the following, adapted to our context.

\begin{Theorem}[{Kirwan \cite[(5.8)]{kirwan:1984}}] \label{thm:module}
Consider a Hamiltonian action of a torus $T$ on a closed manifold $M$.
If $\F$ is a field
of characteristic zero, then $H_T^*(M;\F)$ is a free $H_T^*(\pt;\F)$-module
isomorphic to $H^*(M;\F)\otimes H_T^*(\pt;\F)$.
\end{Theorem}

\begin{noTitle}  \label{nTpush}
{\bf Pushforward maps.}
An  $S^1$-equivariant continuous map of closed oriented $S^1$-manifolds, $f \colon N \to M$  induces 
 the {\bf equivariant pushforward map}
$$f^{!} \colon H_{S^1}^{*}(N;\Z) \to H_{S^1}^{*-n+m}(M;\Z),$$
where $n=\dim N, \, m=\dim M$, as follows.
 For $q, \, \ell \in \N$,  
 we have the pushforward homomorphism 
$ H^{q}(N \times S^{2\ell+1}/S^1;\Z) \to H^{q-n+m}(M \times S^{2\ell+1}/S^1;\Z)$
defined by
$$
\xymatrix{
H^{q}(N \times S^{2\ell+1}/S^1;\Z) \ar[d]  & H^{q+m-n}(M \times S^{2\ell+1}/S^1;\Z)\\
H_{n-q}(N \times S^{2\ell+1}/S^1;\Z) \ar[r] & H_{n-q}(M \times S^{2\ell+1}/S^1;\Z) \ar[u]
}
$$
where the vertical maps are the Poincar\'e duality isomorphisms and the horizontal one is the map induced by $f$ on homology.
To define the equivariant push-forward map $f^{!}$ take $\ell$ large enough such that these cohomology spaces are equal to the equivariant cohomology of $M$ and $N$, see Remark \ref{finiteappro}. The push-forward is independent of $\ell$.
This map is sometimes called the {\bf equivariant Gysin homomorphism}. 
 We similarly define the equivariant pushforward map induced by $(S^1)^k$-equivariant maps.

 For an $S^1$-invariant embedded surface $\iota_{\Sigma} \colon \Sigma \to M$ in a four-dimensional $M$, 
the Poincar\'e dual of $\Sigma$ as an equivariant cycle in $M$, i.e.,
$\iota_{\Sigma}^{!}(1)$, where $1 \in H_{S^1}^{0}(\Sigma;\Z)$, is a class in $H_{S^1}^{2}(M;\Z)$. Its pullback under $\iota_{\Sigma}$ equals  the equivariant Euler class $e_{S^1}(\nu(\Sigma \subset M))$ of the normal $S^1$-vector bundle of $\Sigma$ in $M$. 
\end{noTitle}

The Atiyah-Bott/ Berline-Vergne (ABBV) localization formula 
\cite{AB:1984, BV:1982} expresses 
the equivariant pushforward under $\pi \colon M \to \pt$ of an equivariant cohomology class
as a sum of equivariant pushforwards of $\pi|_{F}$ over 
the connected components $F$ of the fixed point set $M^T$ as follows.
We must use $\Q$ coefficients because Euler classes are inverted.

\begin{Theorem}[Atiyah-Bott \cite{AB:1984} / Berline-Vergne \cite{BV:1982}]\label{thm:abbv}
Suppose a torus $T$ acts on a closed manifold $M$.  Then for any class $\alpha\in H_T^*(M;\Q)$,
\begin{equation}\label{eq:abbv}
\pi^{!}(\alpha) = \sum_{{F\subseteq M^T}} \left({\pi|_{F}}\right)^{!}\left( \frac{ \alpha|_F}{e_T(\nu(F\subseteq M))}\right).
\end{equation}
where the sum on the right-hand side is taken over the connected components $F$ of the fixed point set $M^T$, 
$\alpha|_F$ is the restriction of $\alpha$ to $F$, and $e_T(\nu(F\subseteq M))$ is the equivariant Euler class of the normal bundle of $F$.
\end{Theorem}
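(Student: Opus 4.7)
The plan is to work in the Cartan model of equivariant de Rham cohomology, in which classes in $H_T^*(M;\R)$ are represented by $T$-equivariant polynomial maps $\alpha\colon \algt \to \Omega^*(M)$ satisfying $d_T\alpha = 0$, where $d_T = d - \iota_{X_M}$ and $X_M$ denotes the fundamental vector field associated to $X \in \algt$. The equivariant integral $\pi_*(\alpha)$ is then the ordinary integral of the top-degree piece of $\alpha$, regarded as a polynomial in $X$. First I would reduce to a generic circle subgroup by choosing $X \in \algt$ whose zero locus on $M$ coincides with $M^T$; since both sides of \eqref{eq:abbv} are rational functions of $X$, it suffices to verify the identity at such generic $X$.

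Next, fix a $T$-invariant Riemannian metric $g$ on $M$ and form the $1$-form $\theta = g(X_M,\cdot)$. A direct computation gives $d_T \theta = d\theta - \|X_M\|^2$, and the function $\|X_M\|^2$ vanishes precisely on $M^T$. On $M \setminus M^T$, the formal inverse of $d_T\theta$ is a finite geometric series because $d\theta$ is a $2$-form, so $(d\theta)^k = 0$ for $2k > \dim M$. Setting
\[
\eta \;=\; \alpha \wedge \theta \wedge \sum_{k=0}^{\lfloor \dim M/2\rfloor} \frac{(d\theta)^k}{\|X_M\|^{2(k+1)}},
\]
one checks that $\eta$ is smooth on $M \setminus M^T$ and that $d_T \eta = \alpha$ there.

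Let $U_\epsilon$ be a $T$-invariant tubular neighborhood of $M^T$ of radius $\epsilon$. Stokes' theorem yields
\[
\int_{M \setminus U_\epsilon} \alpha \;=\; -\int_{\partial U_\epsilon} \eta,
\]
and as $\epsilon \to 0$ the left side tends to $\pi_*(\alpha)$ while the right side localizes to a sum over the connected components $F$ of $M^T$. To identify the contribution of each $F$, one chooses a $T$-invariant connection on the equivariant normal bundle $\nu(F \subseteq M) \to F$, uses the Cartan model to represent $e_T(\nu(F \subseteq M))$ in terms of the curvature and moment of the connection, and performs fiber integration along the shrinking sphere bundle to obtain $\pi^F_*\bigl(\alpha|_F / e_T(\nu(F \subseteq M))\bigr)$.

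The main obstacle is this last identification: producing a clean match between the limiting boundary integral of $\eta$ and the Cartan-model representative of $1/e_T(\nu(F \subseteq M))$. This requires carefully normalizing the connection, expanding $\|X_M\|^{-2}$ on the tubular neighborhood as a geometric series in the curvature, and checking that transverse fiber integration produces exactly the reciprocal of the equivariant Euler class. Once this local model is established at every component, summing over $F \subseteq M^T$ and equating with $\pi_*(\alpha)$ gives the formula \eqref{eq:abbv}.
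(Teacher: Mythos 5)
This is one of the places where the paper does not supply a proof at all: Theorem~\ref{thm:abbv} is stated as a classical result and attributed to Atiyah--Bott and Berline--Vergne, so there is no internal argument to compare yours against. What you have written is the standard de~Rham proof in the Cartan model (essentially Berline--Vergne's original argument, also \S7 of Atiyah--Bott), and the skeleton is sound: reduce to a generic $X\in\algt$ with $\{X_M=0\}=M^T$ (possible because a compact $M$ has finitely many isotropy algebras, and legitimate because both sides are rational functions on $\algt$), build $\theta=g(X_M,\cdot)$ from an invariant metric, invert $d_T\theta=d\theta-\|X_M\|^2$ off the fixed set by a finite geometric series, and apply Stokes on $M\setminus U_\epsilon$. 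Two caveats. First, minor: $(d_T\theta)^{-1}=-\sum_k(d\theta)^k/\|X_M\|^{2(k+1)}$ carries a sign you have dropped, and the de~Rham argument proves the identity over $\R$, from which the $\Q$-statement follows by injectivity of $H^*(-;\Q)\to H^*(-;\R)$; neither is a real problem. Second, and more seriously as a matter of completeness: the step you label ``the main obstacle'' --- showing that $\lim_{\epsilon\to 0}\int_{\partial U_\epsilon}\eta$ at a component $F$ equals $\pi^F_*\bigl(\alpha|_F/e_T(\nu(F\subseteq M))\bigr)$ --- is precisely where all the content of the theorem lives. That computation (choosing an invariant connection on $\nu(F\subseteq M)$, writing the equivariant Euler class via moment and curvature, and doing the fiber integration over the shrinking sphere bundle) is not optional polish; without it you have only shown that $\pi_*(\alpha)$ localizes to \emph{some} expression supported on $M^T$, not the stated formula. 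So as submitted this is a correct proof outline rather than a proof; to stand on its own it needs the local model at $F$ worked out, or an explicit reduction to the normal-bundle case via the equivariant Thom isomorphism.
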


\section{Formulas for equivariant characteristic classes} \label{se:new}

The {\bf equivariant characteristic classes} of an equivariant vector bundle $E$ are the 
characteristic classes of the vector bundle $\widetilde{E}$ on $(M \times ES^1)/S^1$ whose 
pull-back to $M \times ES^1$ is $E \times ES^1$. These characteristic classes are 
elements of $H_{S^1}^{*}(M)$. In particular, we have the equivariant Euler class $e_{S^1}(E)$ 
when $E$ is an oriented equivariant real vector bundle and the equivariant 
Chern classes $c_{\ell}^{S^1}(E)$ when $E$ is an equivariant complex vector bundle.
As discussed in \cite[\S 5]{Tu-char:2010}, these equivariant characteristic classes are 
{\bf equivariant extensions} of the ordinary characteristic classes.
To interpret the ABBV relation, we first calculate explicitly  equivariant Euler 
classes and their (formal) inverses.
For the Euler classes, we work with integer coefficients.  For their inverses, 
we must revert to $\Q$. 

In 
case $\nu(F \subset M)$ is an equivariant complex vector bundle over a point,
the formula \cite[(C.13)]{Ggk:2002} for the equivariant Euler class of $\nu(\{\pt\} \subset M)$, obtained from  the splitting principle in equivariant cohomology \cite[Theorem C.31]{Ggk:2002},  simplifies to a single term, as follows.

\begin{Lemma}\label{le:euler}
Consider a linear circle action $S^1\acts\, \C^n$ with weights $b_1,\dots,b_n\in \Z \smallsetminus \{0\}$. Thought of as an equivariant bundle over a point $\C^n=\nu(\{ \vec{0}\}\subset \C^n)\to \vec{0}$, 
this has equivariant Euler class
\begin{equation} \label{eq euler0}
e_{S^1}(\C^n) = (-1)^n b_1\cdots b_n u^n\in H_{S^1}^*(\pt;\Z) = \Z[u],
\end{equation}
with (formal) inverse
$$
(e_{S^1}(\C^n))^{-1} = \frac{(-1)^n}{b_1\cdots b_n u^n}\in \Q[u,u^{-1}].
$$
\end{Lemma}

In the case of an equivariant complex line bundle over a positive-dimensional manifold,
where the action fixes the zero-section,
we may also identify the equivariant Euler class explicitly.  
Moreover, in this case, the equivariant
Euler class is invertible (in the appropriate ring), and we have an explicit formula for its inverse.

\begin{Lemma}\label{le:inv euler}
Let
$$
S^1\acts \left(\begin{array}{c} \xymatrix{\mathscr{L}\ar[d]\\ \Sigma}\end{array}\right)
$$
be an equivariant complex line bundle with the zero section fixed pointwise and the fiberwise action linear.
At any point $p\in \Sigma$, let $b\in \Z \smallsetminus \{0\}$ denote the weight of the circle action on the fibre
over $p$.  
Then the equivariant Euler class of $\mathscr{L}$ has the form
\begin{equation}\label{eq euler}
e_{S^1}(\mathscr{L}) = -1 \otimes b\cdot u +  e(\mathscr{L})\otimes 1\in H^2_{S^1}(\Sigma;\Z),
\end{equation}
where $e(\mathscr{L})\in H^2(\Sigma;\Z)$ denotes the ordinary Euler class of $\mathscr{L}$. Its inverse
(in the ring of rational functions with coefficients in $H^*(\Sigma;\Q)$, namely $H^*(\Sigma;\Q)[u,u^{-1}]$) is
\begin{equation}\label{eq inverse euler}
e_{S^1}(\mathscr{L})^{-1} = -\sum_{i=0}^N e(\mathscr{L})^i\otimes  \left(\frac{1}{bu}\right)^{i+1},
\end{equation}
where $N=\Big\lfloor\frac{\dim(\Sigma)}{2}\Big\rfloor$.
\end{Lemma}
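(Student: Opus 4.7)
\vskip 0.1in
\noindent \textbf{Proof plan for Lemma~\ref{le:inv euler}.}
The plan is to exploit the Borel model directly. Since the $S^1$-action fixes the zero-section of $\mathscr{L}$, it acts trivially on $\Sigma$, and hence
$$
\Sigma\times_{S^1} ES^1 \;=\; \Sigma \times BS^1,
$$
so by the Künneth theorem
$$
H^{*}_{S^1}(\Sigma;\Z)\;=\;H^{*}(\Sigma;\Z)\otimes H^{*}(BS^1;\Z)\;=\;H^{*}(\Sigma;\Z)[u].
$$
The Borel construction $\mathscr{L}\times_{S^1}ES^1\to \Sigma\times BS^1$ is then an ordinary complex line bundle whose equivariant Euler class $e_{S^1}(\mathscr{L})$ is its ordinary first Chern class. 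It lives in
$$
H^{2}(\Sigma\times BS^1;\Z)\;=\;\bigl(H^{2}(\Sigma)\otimes H^{0}(BS^1)\bigr)\oplus\bigl(H^{0}(\Sigma)\otimes H^{2}(BS^1)\bigr),
$$
so it must be of the form $e(\mathscr{L})\otimes 1 + 1\otimes (c\,u)$ for some integer $c$.

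To identify the two pieces, I will restrict along two natural inclusions. First, pulling back along $\Sigma\hookrightarrow \Sigma\times BS^1$ (inclusion over a point of $BS^1$) recovers the underlying ordinary line bundle $\mathscr{L}\to\Sigma$, so the $H^{2}(\Sigma)\otimes H^{0}(BS^1)$ component equals $e(\mathscr{L})\otimes 1$. Second, pulling back along the inclusion $\{p\}\times BS^1\hookrightarrow \Sigma\times BS^1$ at any $p\in\Sigma$ gives the equivariant Euler class of the fibre $\mathscr{L}_p\cong \C$, regarded as an equivariant bundle over a point with weight $b$. By Lemma~\ref{le:euler} (applied with $n=1$, $b_1=b$) this equals $-b\,u$. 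Hence $c=-b$ and \eqref{eq euler} follows.

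For the inverse, factoring out $-bu$ from \eqref{eq euler} gives
$$
e_{S^1}(\mathscr{L}) \;=\; -b\,u\,\Bigl(1 - \tfrac{e(\mathscr{L})}{b\,u}\Bigr).
$$
Working in $H^{*}(\Sigma;\Q)[u,u^{-1}]$, I invert formally by geometric series:
$$
e_{S^1}(\mathscr{L})^{-1} \;=\; \frac{-1}{b\,u}\sum_{i\ge 0}\Bigl(\frac{e(\mathscr{L})}{b\,u}\Bigr)^{i}\;=\;-\sum_{i\ge 0} e(\mathscr{L})^{i}\otimes\Bigl(\frac{1}{b\,u}\Bigr)^{i+1}.
$$
The key observation is that $e(\mathscr{L})^{i}\in H^{2i}(\Sigma;\Q)$ vanishes for $2i>\dim(\Sigma)$, i.e.\ for $i>N=\lfloor \dim(\Sigma)/2\rfloor$, so the series is actually a finite sum truncating at $i=N$, yielding \eqref{eq inverse euler}. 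Once this truncation is justified, a direct check that the product of this finite sum with $-bu + e(\mathscr{L})$ telescopes to $1$ confirms that it is genuinely the inverse in the ring $H^{*}(\Sigma;\Q)[u,u^{-1}]$.

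The only subtle point is the sign convention in the weight contribution to $e_{S^1}(\mathscr{L})$: keeping this consistent with the normalization in Lemma~\ref{le:euler} (so that $e_{S^1}$ of a weight-$b$ fibre is $-bu$ rather than $+bu$) is what forces the overall minus sign in \eqref{eq euler} and, consequently, in \eqref{eq inverse euler}. No other step requires more than routine Künneth and geometric-series manipulations.
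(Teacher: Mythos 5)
Your proposal is correct and follows essentially the same route as the paper: both identify the two K\"unneth components of $e_{S^1}(\mathscr{L})$ in $H^2_{S^1}(\Sigma;\Z)$ (the paper cites the weight formula from Guillemin--Ginzburg--Karshon and naturality under $p^*$, where you restrict to $\{p\}\times BS^1$ and to $\Sigma\times\{\mathrm{pt}\}$, which is the same argument), and both establish the inverse via the telescoping product with truncation of $e(\mathscr{L})^{N+1}$ for dimension reasons. Your geometric-series derivation of the inverse is a pleasant way to motivate the formula the paper simply writes down and verifies, but it is not a substantively different proof.
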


\begin{proof}
We first note that because the $S^1$ action fixes the surface $\Sigma$, and since both the cohomology of the orientable surface $\Sigma$ and the cohomology of $BS^1$, over $\Z$, are torsion free,  Kunneth formula gives the splitting 
$$
H_{S^1}^*(\Sigma;\Z) = H^*(\Sigma;\Z)\otimes H^*_{S^1}(\pt;\Z).
$$
Moreover,
$$
e_{S^1}(\mathscr{L})\in H^2_{S^1}(\Sigma;\Z)
$$
and by the splitting,
$$
H^2_{S^1}(\Sigma;\Z)\cong \bigg(H^0(\Sigma;\Z) \otimes H^2_{S^1}(\pt;\Z) \bigg) \bigoplus  \bigg(H^2(\Sigma;\Z)\otimes H^0_{S^1}(\pt;\Z)\bigg).
$$
The leading term in \eqref{eq euler} is guaranteed by  \cite[(C.13)]{Ggk:2002}.  Furthermore, the equivariant Euler class
is defined to be the Euler class of the bundle $\widetilde{\mathscr{L}}$ that fits into the diagram
$$
\xymatrix{
\mathscr{L}\times ES^1 \ar[r]\ar[d] & \widetilde{\mathscr{L}} \ar[d] \\
\Sigma\times ES^1 \ar[r]_(0.4){p} & \Sigma\times_{S^1} ES^1.
}
$$
By naturality of characteristic classes, we must have that the restriction $$p^*(e_{S^1}(\mathscr{L})) = e(\mathscr{L}),$$ and
so the second term in \eqref{eq euler} must be $e(\mathscr{L})\otimes 1$.

To check our formula for $e_{S^1}(\mathscr{L})^{-1}$, we take the product
\begin{eqnarray*}
 \left(-1 \otimes b\cdot u + e(\mathscr{L})\otimes 1\right)& \cdot& \left( -\sum_{i=0}^N e(\mathscr{L})^i \otimes  \left(\frac{1}{bu}\right)^{i+1} \right)\\
& = & 
\sum_{i=0}^N e(\mathscr{L})^i\otimes \left(\frac{1}{bu}\right)^{i} 
 - \sum_{i=0}^N e(\mathscr{L})^{i+1}\otimes \left(\frac{1}{bu}\right)^{i+1}\\
 &  =  & 1\otimes 1 - e(\mathscr{L})^{N+1}\otimes \left(\frac{1}{bu}\right)^{N+1}. 
\end{eqnarray*}
But $e(\mathscr{L})^{N+1}=0$ for dimension reasons, so we see that 
the product equals $1\otimes 1$, as desired.
\end{proof}

 \begin{noTitle} \label{weights}
 {\bf Weights of the action.}
Consider an effective Hamiltonian $S^1\acts (M^4,\omega)$.
At an isolated fixed point $p \in M$, 
there exist complex coordinates on a neighbourhood of $p$ in $M$, and unique non-zero integers $m$ and $n$, such that the circle action is linear with weights $m$ and $n$ 
  \cite[Corollary A.7]{Karshon:1999}; the tangent space $TM|_{p}=\C^2$ splits as a sum of complex line-bundles $\C_{m} \oplus \C_{n}$. Denote the absolute values of the weights at $p$ by $m_p$ and $n_p$.

  At a fixed surface $\Sigma$, we have $TM|_{\Sigma}=T \Sigma \oplus \nu(\Sigma \subset M)$.
  The normal bundle $ \nu(\Sigma \subset M)$ can be viewed as an equivariant complex vector bundle \cite[Corollary A.6]{Karshon:1999}, moreover
  it is a complex {line} bundle since $\dim_{\R}M=4$ and $\dim_{\R}\Sigma=2$. Note that the weight of the action on $T\Sigma$ is $0$.
  For any point $p\in \Sigma$, the $S^1$-weight in the
normal direction to $\Sigma$ is $\pm 1$.  It must be so because the action is effective
and if it were $\pm b$ with $|b|>1$, there would be a global $\Z_b$ stabilizer.
Moreover, it is positive when $\Sigma$ is a minimum and negative when $\Sigma$ is
a maximum.
  \end{noTitle}
For an isolated fixed point $p\in M^{S^1}$, the equivariant Euler class is
an element of  $H^4_{S^1}(p;\Z)$. By Lemma~\ref{le:euler} and \S \ref{weights},
$$e_{S^1}\big(\nu(\{p\}\subset M)\big) =\left\{\begin{array}{rl}
 -m_pn_pu^2 & \text{ if $p$ is interior}\\
 m_p n_p u^2& \text{ if $p$ is extremal}
\end{array}\right.
$$
with inverse 
\begin{equation}\label{eq:eu-11}
\left(e_{S^1}\big(\nu(\{p\}\subset M)\big)\right)^{-1} = \left\{\begin{array}{rl}
 -e_p\frac{1}{u^2} & \text{ if $p$ is interior}\\
 e_p\frac{1}{u^2} & \text{ if $p$ is extremal}
\end{array}\right.
\end{equation}
where  $e_p:=\frac{1}{m_pn_p}$.
For an $S^1$-fixed surface $\Sigma$ (which must be a minimum or maximum
critical set), 
the equivariant Euler class is an element
of $H^2_{S^1}(\Sigma;\Z)$.  
From Lemma~\ref{le:inv euler} and \S \ref{weights}, the equivariant Euler class is
$$e_{S^1}\big(\nu(\Sigma\subset M)\big) = \pm 1\otimes u + e_\Sigma[\Sigma]\otimes 1,$$
where the first sign is determined by whether $\Sigma$ is a minimum ($-$) 
or maximum ($+$), $e_{\Sigma}$
is the self intersection $\Sigma\cdot \Sigma$, and $[\Sigma]$ is the Poincar\'e dual of the class of a point in $H_0(\Sigma;\Z)$. 
(Recall that under the identification $H^2(\Sigma;\Z)\cong \Z$,
the self intersection is the ordinary Euler class of the 
normal bundle $\nu(\Sigma\subset M)$.)  
By Lemma~\ref{le:inv euler}, the inverse is
\begin{equation} \label{eq:eu-12}
e_{S^1}\big(\nu(\Sigma\subset M)\big)^{-1} = \pm1\otimes \frac{1}{u} - e_\Sigma[\Sigma]\otimes \frac{1}{u^2}.
\end{equation}

We may also deduce the restrictions of the equivariant Chern classes to connected components of the fixed point 
set from \S \ref{weights} and the splitting principle. 
\begin{Corollary} \label{cor:chern}
Consider an effective Hamiltonian $S^1 \acts (M^4,\omega)$.
\begin{itemize}
\item At a fixed point $p$ 
$${c_{1}^{S^1}(TM)}|_{p}=(-m-n)u \in H_{S^1}^{2}(p;\Z),$$ ${c_{2}^{S^1}(TM)}|_{p}=m n u^2 \in H_{S^1}^{4}(p;\Z)$, and $$({{c_1^{S^1}}(TM)|_{p})^2-2c_2^{S^1}}(TM)|_{p}=(m^2+n^2) u^2,$$  
where $m,n$ are the weights of the action at $p$.
\item At a fixed surface $\Sigma_{*}$, where $*$ is either $\min$ or $\max$,
$$c_1^{S^1}(TM)|_{\Sigma_{*}}=(2-2g)[\Sigma]\otimes 1 + e_{*}[\Sigma] \otimes 1+(-1)^{\delta_{*=\min}}	 \otimes u,$$
$c_2^{S^1}(TM)|_{\Sigma_{*}}=(-1)^{\delta_{*=\min}}(2-2g)[\Sigma]\otimes u$, 
and 
$$({{c_1^{S^1}}(TM)|_{\Sigma_{*}})^2-2c_2^{S^1}}(TM)|_{\Sigma_{*}}=1 \otimes u^2+2(-1)^{\delta_{*=\min}}e_{*}[\Sigma] \otimes u,$$
where $e_{*}$ is the self intersection of $\Sigma_{*}$.
\end{itemize}
\end{Corollary}

 \begin{proof}
 By \S \ref{weights} and the splitting principle  \cite[Theorem C.31]{Ggk:2002}, the restriction of the 
 total equivariant Chern class
$$
c^{S^1}(TM)=1+c_{1}^{S^1}(TM) +c_{2}^{S^1}(TM) +\ldots
$$
to a connected component $F$ of the fixed point set equals $c^{S^1}(\C_{m}) c^{S^1}(\C_{n})$ if $F=p$ and $c^{S^1}(T \Sigma_*) c^{S^1}(\nu(\Sigma_* \subset M))$ if $F=\Sigma_{*}$.
 The class $c_{1}^{S^1}(L)$ of an equivariant complex line bundle $L$ over a fixed manifold equals $c_{1}(L)-bu$, where $b$ is the weight of the $S^1$-representation on a fiber of $L$, see 
\cite[Example C.41]{Ggk:2002}.
Hence 
$$c^{S^1}(\C_{m}) c^{S^1}(\C_{n})=(1+c_{1}(\C_{m})-m u)(1+c_{1}(\C_{n})-n u)=(1-m u)(1-n u),$$
and 
 $$c^{S^1}(T \Sigma_*) c^{S^1}(\nu(\Sigma_* \subset M))=(1+c_{1}(T\Sigma_*)-0)(1+c_{1}(\nu( \Sigma_* \subset M))-(-1)^{\delta_{*=\max}}\otimes u).$$
The corollary follows; in the case $F=\Sigma_{*}$ we also use the fact that $[\Sigma_*]^2=0$ in the calculations.
 \end{proof}

\section{The equivariant cohomology of a Hamiltonian $S^1$-action on a $4$-manifold: Proof of Theorem~\ref{thm:S1onM4}}\label{se:main thm}

Consider an effective Hamiltonian $S^1$-action on a closed connected four-manifold $M$ with momentum map $\Phi$. 
In the proof of part (A) of Theorem~\ref{thm:S1onM4}, we adapt the argument of Tolman and Weitsman
 \cite[Proof of Prop.~2.1]{TW-hamTsp:1999} 
 to work in our situation when the coefficients are the integers.
 Franz and Puppe have also adapated this argument to work over $\Z$  (see  \cite[Proof of Theorem 5.1]{Franz-Puppe:2007}),
 but they require the action to have connected stabilizer groups, which we do not have.

\subsection*{Proof of Part (A) of Theorem~\ref{thm:S1onM4}}
We will establish that 
$H_{S^1}^*(M;\Z)$ is a free $H_{S^1}^*(\pt;\Z)$-module
isomorphic to $$H^*(M;\Z)\otimes H_{S^1}^*(\pt;\Z).$$

The momentum map $\Phi$ is Morse-Bott at every connected component of the
critical set.  The critical sets are
precisely the connected components of the fixed point set. 
 The fixed point 
set $M^{S^1}$ consists of isolated points and up to two surfaces.  The surfaces
are symplectic submanifolds and are hence orientable.
Thus, $H^*(M^{S^1};\Z)$ is torsion free. 
The negative normal bundle to a critical set $C$ is a complex bundle, except at
the minimum, where the negative normal bundle is a rank zero bundle.   Thus, the
Morse-Bott indices are always even and hence $\Phi$ is a perfect Morse-Bott function.

Let $c$ be a critical value for $\Phi$ and denote
$$
M^\pm = \Phi^{-1}((-\infty,c\pm\varepsilon)).
$$
Consider
the long exact sequence in equivariant cohomology with $\Z$
coefficients,
$$
\cdots\to H^{*}_{S^1}(M^+,M^-)\to H^*_{S^1}(M^+) \to H^*_{S^1}(M^-) \to
H^{*+1}_{S^1}(M^+,M^-)\to\cdots.
$$
First suppose that $c=\Phi(C)$ is a non-minimal critical value corresponding to a critical set $C$.  We may 
choose $\varepsilon$ to be small enough that $c$ is the only critical value in the interval
$[c-\varepsilon,c+\varepsilon]$.  Denote the negative disc and sphere
bundles to $C$ in $M$ by $D_c$ and $S_c$ respectively.
We let $\lambda$ denote the Morse index of $C$ in $M$.  Following an
identical argument to \cite[Proof of Proposition 2.1]{TW-hamTsp:1999},  we
obtain a commutative diagram, with $\Z$ coefficients,
\begin{equation}\label{eq:commdiagr}
\begin{array}{c}
\xymatrix{
{}\ar[r] &  H^{*}_{S^1}(M^+,M^-)\ar[r]^{\gamma_c}\ar[d]_{\cong} &
H^*_{S^1}(M^+) \ar[r]^{\beta_c}\ar[d] & H^*_{S^1}(M^-)\ar[r] &  \\
&  H_{S^1}^*(D_c,S_c)\ar[r]^{\delta_c}\ar[d]_{\cong} & H_{S^1}^*(D_c) & & \\
&  H^{*-\lambda}_{S^1}(D_c) \ar[ur]_{\cup e_c} & & &
}\end{array}
\end{equation}
where $e_c=e_{S^1}(D_c)$ is the equivariant Euler class of
the bundle $D_c\to C$. The left-most vertical arrows in the diagram are
excision and the Thom isomorphism with $\Z$
coefficients.  An explicit analysis of the Thom isomorphism and the push-pull forumla
guarantee that the diagonal arrow is
indeed the cup product with $e_c$. 
Because  $C\subseteq M$ is torsion-free\footnote{
This is the point at which Franz and Puppe 
\cite[Proof of Theorem 5.1]{Franz-Puppe:2007}
need connected stabilizer groups to guarantee
that the Euler class $e_c$ is primitive and doesn't interfere with any torsion in the fixed components.
By contrast, we have no torsion.
}, the cup
product map $\cup e_c$ is injective, and so $\delta_c$ and
$\gamma_c$ must also be injective.  Thus, the long exact sequence
splits.

If $a=f(C)$ is the minimum critical value, the spaces $M^+$ and $C$
are equivariantly homotopic, and $M^-$ is empty.  Thus,
$H_{S^1}^*(M^-)=0$ and $H_{S^1}^*(M^+)\cong H_{S^1}^*(C)$.  Therefore,
the sequence splits in this case as well.    Finally, when $b=f(C)$ is the maximum
critical value, $M^+=M$ and we will have assembled $H^*_{S^1}(M;\Z)$.

Because $\Phi$ is a perfect Morse-Bott function for both ordinary
and equivariant cohomology, we may
assemble $H_{S^1}^*(M;\Z)$ as a module over $H_{S^1}^*(\pt;\Z)$.
As the critical set is precisely the set of fixed points, 
we conclude that  if $F_1,\dots, F_s$ are the (finitely many) components of the fixed set $M^{S^1}$
and $\lambda_1,\dots,\lambda_s$ are their Morse-Bott indices, we have now established that as modules
over $H_{S^1}^*(\pt;\Z)$,
$$
H_{S^1}^*(M;\Z) = \bigoplus_j H^{*-\lambda_j}(F_j;\Z)\otimes H_{S^1}^*(\pt;\Z) = H^*(M;\Z)\otimes H_{S^1}^*(\pt;\Z).
$$

\subsection*{Proof of Part (B) of Theorem~\ref{thm:S1onM4}}
 As before, we use $\Phi$ as a Morse-Bott function on $M$. The critical set is
the fixed point 
set $F=M^{S^1}$ and it consists of isolated points and up to two surfaces. 
Order the critical values of
$\Phi$ as $c_1<\cdots<c_N$, and let $F_i$ be the critical points
with $\Phi(F_i)=c_i$. Denote
$$
M_i^\pm = \Phi^{-1}((-\infty,c_i\pm\varepsilon)).
$$
The injectivity statement holds for $M_1^-$ because this set is empty.  We use this as a base
case and proceed by induction.  Suppose that
$$
H_{S^1}^*(M_i^-)\hookrightarrow H_{S^1}^*(M_i^-\cap F)
$$
is an injection with $\Z$ coefficients.  
As in the proof of part (A), we have a short exact sequence  with $\Z$ coefficients
$$
0\to H^{*}_{S^1}(M_i^+,M_i^-)\longrightarrow H^*_{S^1}(M_i^+) \longrightarrow H^*_{S^1}(M_i^-) \to 0.
$$
Let $i^*_\pm$ be the inclusion $M_i^\pm\cap F\hookrightarrow M_i^\pm$.  Then we
have a commutative diagram, with $\Z$ coefficients,
$$
\xymatrix{
0\ar[r] & H^{*}_{S^1}(M_i^+,M_i^-)\ar[r]\ar[d]^{\cong} & H^*_{S^1}(M_i^+)
\ar[r]\ar[d]^{i^*_+} & H^*_{S^1}(M_i^-) \ar[r]\ar[d]^{i^*_-} & 0 \\
0\ar[r] & H^{*}_{S^1}(F_i)\ar[r] & H^*_{S^1}(M_i^+\cap F) \ar[r] &
H^*_{S^1}(M_i^-\cap F) \ar[r] & 0.
}
$$
The map $i_-^*$ is an injection by the inductive hypothesis.  
The Four Lemma guarantees that $i_+^*$ is an injection.  Finally, notice
that $M_i^+$ is equivariantly homotopy equivalent to $M_{i+1}^-$.
This completes the proof of part (B).

\subsection*{Proof of Part (D) of Theorem~\ref{thm:S1onM4}}
Let $M_{(i)}$, $-1 \leq i \leq k$, be the equivariant {\bf $i$-skeleton} of $T=T^k \acts M$, i.e., the union of orbits of dimension $\leq i$. In particular, $M_{(-1)}=\emptyset, \, M_{(0)}=M^{T}$ and $M_{(k)}=M$.

By \cite[Lemma 4.1]{Franz-Puppe:2007}, it is enough to show that 
for  $0 \leq j \leq k=1$ the long exact
 sequence, with $\Z$ coefficients,
 $$
  \cdots \to H_{S^1}^{*}(M,M_{(j)}) \to H_{S^1}^{*}(M,M_{(j-1)}) \to H_{S^1}^{*}(M_{(j)},M_{(j-1)}) \to H_{S^1}^{*+1}(M,M_{(j)}) \to \cdots,$$
  obtained from the inclusion of pairs $(M_{(j)},M_{(j-1)}) \hookrightarrow (M,M_{(j-1)})$,
  splits into short exact sequences (over $\Z$)
  $$0 \to H_{S^1}^{*}(M,M_{(j-1)}) \to H_{S^1}^{*}(M_{(j)},M_{(j-1)}) \to H_{S^1}^{*+1}(M,M_{(j)}) \to 0,$$  
  i.e., for $j=0$ we need (over $\Z$)
\begin{equation*} \label{eq0}
0 \to H_{S^1}^{*}(M,\emptyset) \to H_{S^1}^{*}(M^{S^1},\emptyset) \to H_{S^1}^{*+1}(M,M^{S^1}) \to 0
\end{equation*}
 and  for $j=1$ we need (over $\Z$)
\begin{equation*} \label{eq1}
0 \to H_{S^1}^{*}(M,M^{S^1}) \to H_{S^1}^{*}(M,M^{S^1}) \to H_{S^1}^{*+1}(M,M) \to 0.
\end{equation*}
For $j=0$, we have the long exact sequence for the pair $(M,M^{S^1})$.
The injectivity of the map $i^*:H_{S^1}^*(M)\to H_{S^1}^*(M^{S^1})$, proven above,
then forces the long exact sequence to split into short exact sequences, as desired.
In the case $j=1$ the long exact sequence degenerates, so it splits into very short exact sequences: $H_{S^1}^{*+1}(M,M)=0$ 
and the map $ H_{S^1}^{*}(M,M^{S^1}) \to  H_{S^1}^{*}(M,M^{S^1})$ induced from the 
inclusion $(M,M^{S^1}) \to (M,M^{S^1})$ is the identity isomorphism.

\subsection*{Proof of Part (E) of Theorem~\ref{thm:S1onM4}}
Denote by  $I \colon M \to (M\times ET)/T$ the inclusion of the fiber, and by 
\begin{equation} \label{surjfiber}
 I^{*} \colon H_{T}^{*}(M;\Z) \to H^{*}(M;\Z)
\end{equation} 
the induced map on cohomology.
Franz and Puppe have established that the exactness of the Atiyah-Bredon sequence implies that 
$I^{*} \colon H_{S^1}^{*}(M;\Z) \to H^{*}(M;\Z)$ is a surjection  \cite[Theorem 1.1]{Franz-Puppe:2007}.
This is equivalent to the statement that the map 
$$H_{S^1}^{*}(M;\Z)\otimes_{H_{S^1}^*} \Z \to H^{*}(M;\Z)  $$
is an isomorphism.  Moreover, because $H_{S^1}^*(\pt;\Z)$ is generated by the degree two class $u$, we then 
have a short exact sequence
$$0 \to \langle \pi^{*}(u) \rangle \to  H_{S^1}^{*}(M;\Z) \to H^{*}(M;\Z) \to 0.$$

\begin{noTitle}\label{ranks}
{\bf Equivariant Poincar\'e polynomials.} 
The $S^1$-equivariant cohomology  of $M$ splits 
$$
H_{S^1}^*(M;R) \cong  H^*(M;R)\otimes H^*_{S^1}(\pt;R),
$$
as $H^*_{S^1}(\pt;R)$-modules: 
Theorem \ref{thm:module} for $R=\Q$ and Part (A) of Theorem~\ref{thm:S1onM4} for $R=\Z$.
Hence the equivariant Poincar\'e polynomial splits:
\begin{equation}
P_{S^1}^{M}(t)=P^{M}(t)\cdot P_{S^1}^{\pt}(t).
\end{equation}
By \eqref{eqhs1pt},
\begin{equation}
P_{S^1}^{\pt}(t)=(1+t^2+t^4+\ldots)=\frac{1}{1-t^2}.
\end{equation}
We use Morse theory to find the Poincar\'e polynomial $P^{M}(t)$.
The momentum map of the Hamiltonian circle action is a perfect Morse-Bott function whose critical points 
are the fixed points for the circle action \cite[\S 32]{GS-book:1984}.
Therefore
 \begin{equation}\label{eqHj}
 \dim(H^{j}(M;R))=\sum \dim H^{j-\lambda_F}(F;R),
 \end{equation}
 where we sum over the connected components $F$ of the fixed point set, and
where $\lambda_F$ is the index of the component $F$.
In the special case of $S^1\acts M^4$, the fixed point components are
finitely many isolated points and up to two orientable surfaces, of the same genus $g$.
The contribution of each fixed  component is as follows. For a minimal fixed surface, $\lambda_F=0$, 
so the contribution to $H^{0}(M)$ is $1$ and to $H^{1}(M)$ is $2g$.
For a maximal fixed surface $\lambda_F=2$, so the contribution to $H^{2}(M)$ is $1$ and to $H^{3}(M)$ is $2g$.
For an isolated fixed point, $\lambda_F=0$ if it is minimal, and the point contributes $1$ to $H^{0}(M)$.
For an isolated fixed point, $\lambda_F=2$ if it is an interior fixed point, and the point contributes $1$ to $H^{2}(M)$.
Finally, for an isolated fixed point, $\lambda_F=4$ if it is maximal,  and the point contributes $1$ to $H^{4}(M)$.
See, e.g., \cite{YK-max_tori:2003}. 
\end{noTitle}

Hence
\begin{equation}\label{eqpoly1}
P^M(t)=1+\delta_{min} 2 g t+ (\ell-2+2\delta_{min}+2\delta_{max}) t^2+
\delta_{max} 2g t^3+t^4,
\end{equation}
and 
\begin{eqnarray}\label{eqfixedp}
P^{M^{S^1}}(t) &=& \big|\text{ isolated points in }{M^{S^1}}\ \big|+\big|\text{ surfaces in }{M^{S^1}}\ \big|(1+2gt+t^2) \\ \nonumber
                     &=&(\ell+\delta_{min}+\delta_{max})+(\delta_{min}+\delta_{max})2gt+(\delta_{min} +\delta_{max})t^2,
\end{eqnarray}
where
$$\ell=\# \text{ isolated fixed points},$$
$$\delta_{min}=\# \text{ minimal fixed surfaces (zero or one)}, \mbox{ and}$$
$$\delta_{max} =\# \text{ maximal fixed surfaces (zero or one)}.$$
Therefore
\begin{equation}\label{eq:main}
\begin{array}{rcl}
P_{S^1}^{M}(t) & = & P^{M}(t) \cdot  \frac{1}{1-t^2}\\
& = & 1 + (\ell-1+2\delta_{min}+2\delta_{max})t^2 + 
(\ell+2\delta_{min}+2\delta_{max})t^4\left(\frac{1}{1-t^2}\right)\\
& &  + \delta_{min}2g t +(\delta_{min}+\delta_{max})2gt^3\left(\frac{1}{1-t^2} \right).
\end{array}
\end{equation}
Also
\begin{equation}\label{eq:main2}
\begin{array}{rcl}
P_{S^1}^{M^{S^1}}(t)&=&P^{M^{S^1}}(t) \cdot \frac{1}{1-t^2}\\
&=&(\ell+\delta_{min}+\delta_{max})+(\ell+2\delta_{min}+2\delta_{max})t^2\\
& & +(\ell+2\delta_{min}+2\delta_{max})t^4\left(\frac{1}{1-t^2}\right)\\
& &+(\delta_{min}+\delta_{max})2gt+(\delta_{min}+\delta_{max})2gt^3
\left(\frac{1}{1-t^2} \right).
\end{array}
\end{equation}

The differences between the corresponding coefficients in $P_{S^1}^{M^{S^1}}(t)$ and $P_{S^1}^{M}(t)$ tell us how many constraints  
cut out $i^*(H_{S^1}^*(M;R))$ in $H^*_{S^1}(M^{S^1};R)$.  
The constraints are linear relations
among the equivariant cohomology classes on $M^{S^1}$, and we will refer colloquially to
the {\bf relations} the classes must satisfy.
Equations \eqref{eq:main} and \eqref{eq:main2} combine to give 
the following lemma.

\begin{Lemma}\label{le:relations}
Let $S^1\acts M^4$ be a closed connected Hamiltonian space.
\begin{eqnarray*}
P_{S^1}^{M^{S^1}}(t)-P_{S_1}^{M}(t)&=&\Big[ (\ell+\delta_{max}+\delta_{min}-1)+\delta_{max} 2 gt+(2-\ell-\delta_{max}-\delta_{min})t^2\\
& & \phantom{MOVE OVER a bit} -\delta_{max}2g t^3-t^4\Big]\cdot (1+t^2+t^4+\ldots)\\
&=&(\# \text{ components of }M^{S^1}-1)+2gt+t^2.
\end{eqnarray*}
The coefficient $2g$ of $t$ in the last equality follows because 
if $g>0$, we must have $\delta_{max}=1$.
Hence we must find $(\# \text{ components of }M^{S^1}-1)$ relations in degree $0$;  $2g$ relations in degree $1$; and one relation in degree $2$ to 
determine the image $$i^*(H_{S^1}^*(M;R))\subset 
H^*_{S^1}(M^{S^1};R),$$ for $R=\Z,\, \Q$. 
\end{Lemma}

\begin{noTitle}\label{sec:ABBV}
{\bf Constraints on the image of $i^*$ from ABBV.}
The ABBV relation \eqref{eq:relations} will impose one constraint in homogeneous degree $2$ on tuples
$$
\alpha=(\alpha|_F)\in H^*_{S^1}(M^{S^1};\Q) = \bigoplus_{F\subset M^{S^1}} 
H_{S^1}^*(F;\Q).
$$
Suppose we have such a tuple $\alpha=(\alpha|_F)$.  For an isolated fixed point $p$, set
$$
\delta_p=\begin{cases}
1 & p\text{ is interior}\\
0 & p\text{ is extremal}.
\end{cases}$$
At each isolated fixed point $p$, we may
identify
$H^2_{S^1}(p;\Q)=H^0(p;\Q)\otimes H^2_{S^1}(\pt;\Q)$.  Thus,
$\alpha|_p=1\otimes c_pu$ for some $c_p\in \Q$.  
In the ABBV relation \eqref{eq:relations}, this will contribute 
$$
(\pi|_p)^!\Big[\alpha|_p\cdot (e_{S^1}(\nu(p\subset M)))^{-1}\Big]=(\pi|_p)^!\left[(1\otimes c_pu) \cdot \left((-1)^{\delta_{p}}e_p\otimes \frac{1}{u^2}\right)\right]= (-1)^{\delta_{p}}\frac{c_pe_p}{u},
$$
where the first equality is by \eqref{eq:eu-11}.
Next, for a fixed surface $\Sigma$, 
$$
H^2_{S^1}(\Sigma;\Q)=(H^2(\Sigma;\Q)\otimes H^0_{S^1}(\pt;\Q))\oplus (H^0(\Sigma;\Q)\otimes H^2_{S^1}(\pt;\Q)).
$$
Thus $\alpha|_{\Sigma}=[\Sigma]\otimes a_{\Sigma} + b_{\Sigma}\otimes u$, where
$a_{\Sigma},b_{\Sigma}\in\Q$.
In the term in \eqref{eq:relations}, this will contribute 
\begin{eqnarray*}
(\pi|_\Sigma)^!\Big[\alpha|_\Sigma \cdot (e_{S^1}(\nu(\Sigma\subset M)))^{-1}\Big] & = &(\pi|_\Sigma)^!\Big[([\Sigma]\otimes a_{\Sigma} + b_{\Sigma}\otimes u)\cdot 
(e_{S^1}(\nu(\Sigma\subset M)))^{-1}\Big] \\ 
& = &
(\pi|_\Sigma)^!\left[([\Sigma]\otimes a_{\Sigma} + b_{\Sigma}\otimes u) \cdot \left( \pm 1\otimes \frac{1}{u} - e_\Sigma[\Sigma]\otimes \frac{1}{u^2} \right)\right]\\
& = & \pm \frac{a_\Sigma}{u}-\frac{b_\Sigma e_\Sigma}{u},
\end{eqnarray*}
where the second equality is by \eqref{eq:eu-12}.
Combining these, we get a term of the form
$$
\sum_p  (-1)^{\delta_{p}}\frac{c_pe_p}{u} +\delta_{max} \left(\frac{a_{max}}{u}-\frac{b_{max} e_{max}}{u}\right) -\delta_{min} \left(\frac{a_{min}}{u}+\frac{b_{min} e_{min}}{u}\right),
$$
where $p$ runs over all isolated fixed points.
This is in $\Q[u]$ if and only if
\begin{equation}\label{eq:abbv relation}
\left(\sum_p  (-1)^{\delta_{p}}c_pe_p\right) +\delta_{max} \left(a_{max}-b_{max} e_{max}\right) -\delta_{min} \left(a_{min}+b_{min} e_{min}\right)=0.
\end{equation}
This precisely gives us one linear relation among the rational numbers $c_p$, $a_{max}$,
$b_{max}$, $a_{min}$ and $b_{min}$.
\end{noTitle}

\subsection*{Proof of  Part (C) of Theorem~\ref{thm:S1onM4}}
We want to determine which classes in $H^*_{S^1}(M^{S^1};\Q)$ are the images
of global equivariant classes.  Let $S$ denote the submodule of classes in
$H^*_{S^1}(M^{S^1};\Q)$ which satisfy conditions (0), (1)  and (2) of
Theorem~\ref{thm:S1onM4}.
As a submodule of a free module over the PID $\Q[u]=H_{S^1}^*(\pt;\Q)$, 
the submodule $S$ is itself a free  $H_{S^1}^*(\pt;\Q)$-module.

By Parts (A) and (B) of Theorem~\ref{thm:S1onM4} and Theorem~\ref{thm:module}, we
also know that $i^*(H_{S^1}^*(M;\Q))$ is a free submodule of $H^*_{S^1}(M^{S^1};\Q)$.
We aim to show that $i^*(H_{S^1}^*(M;\Q))\subset S$ and 
that these have equal rank in homogeneous degree $k$ for each $k$.
This will prove that $i^*(H_{S^1}^*(M;\Q))= S$.

We first consider equivariant cohomology classes of homogeneous degree zero.  
By Theorem~\ref{thm:module}, we have 
$H_{S^1}^0(M;\Q) = H^0(M)\otimes H_{S^1}^0(\pt;\Q) = \Q\otimes H_{S^1}^{0}(\pt;\Q)$,
and so
$$
\dim(H_{S^1}^0(M;\Q))=1
$$
over $H_{S^1}^0(\pt;\Q)$.
Constant functions on $M$ are equivariant, so they represent classes in
$H_{S^1}^0(M;\Q)$.  They must represent all of $H_{S^1}^0(M;\Q)$ since
it is one dimensional.  Thus, for $\alpha\in i^*(H_{S^1}^0(M;\Q))$, its restriction
to any fixed component is its constant value.  This means that for a class
in $H_{S^1}^0(M^{S^1};\Q)$ to be in the image of $i^*$, it must be a constant tuple,
which is equivalent to satisfying $(\# \text{ components of }M^{S^1}-1)$ relations
which force the tuple to be constant.  These are the $(\# \text{ components of }M^{S^1}-1)$
relations sought in Lemma~\ref{le:relations}.

Next, we note that because the action $S^1\acts M^{S^1}$ is trivial,
$$H^1_{S^1}(M^{S^1};\Q) = \Big(H^1(M^{S^1};\Q)\otimes H^0_{S^1}(\pt;\Q)\Big)
\oplus \Big(H^0(M^{S^1};\Q)\otimes H^1_{S^1}(\pt;\Q)\Big),$$
and the second term on the right-hand side is zero since $BS^1$ is simply connected.  
Moreover, $H^1(M^{S^1})$ is non-zero 
only if there are fixed surfaces of positive genus.  Thus, $H^1_{S^1}(M^{S^1})$
is non-zero only in the positive genus case.  In that case, we have two fixed surfaces of
the same genus.  A homogeneous 
equivariant class of degree
$1$ will be zero on each interior fixed point.  A globally constant class of homogenous
degree one is, as ever, 
$S^1$-equivariant.  Such a class will restrict to the same class on $\Sigma_{max}$
and $\Sigma_{min}$.  That is, we will have a pair $(\alpha|_{\Sigma_{min}},
\alpha|_{\Sigma_{max}})$ for which, when we identify 
$H^1(\Sigma_{min};\Q)\cong H^1(\Sigma_{max};\Q)$, we have 
$\alpha|_{\Sigma_{min}}=\alpha|_{\Sigma_{max}}$.
The possible classes of this form make up a $\dim(H^1(\Sigma;\Q))=2g$ dimensional
subspace of $H^1_{S^1}(M^{S^1})$. We know from $\eqref{eq:main}$ that
$i^*(H^1_{S^1}(M;\Q))$ is $2g$ dimensional, so as in the degree zero case, these must 
be everything in the image of $i^*$.  In terms of relations, we will have exactly
the $4g-2g=2g$ relations that force $\alpha|_{\Sigma_{min}}=\alpha|_{\Sigma_{max}}$,
namely the $2g$ relations sought in Lemma~\ref{le:relations}. 

We conclude that $i^*(H_{S^1}^*(M;\Q))$ is a subset of the submodule of classes in
$H^*_{S^1}(M^{S^1};\Q)$ which satisfy conditions (0) and (1). 
By the ABBV localization formula \ref{thm:abbv}, every class in $i^*(H_{S^1}^*(M;\Q))$ is in the submodule of classes that satisfy the ABBV relation \eqref{eq:relations}. Therefore, $i^*(H_{S^1}^*(M;\Q))$ is a subset of the intersection submodule $S$.
The ABBV relation imposes weaker constraints than being globally constant in
homogenous degree zero, and it imposes no constraints in homogenous degree
one.  
In homogeneous degree two, it imposes exactly one constraint \eqref{eq:abbv relation}. 
This is precisely the one degree two relation sought in Lemma~\ref{le:relations}.

\noindent Thus, we have verified that $i^*(H_{S^1}^*(M;\Q))\subset S$ and has the same
ranks, so the two (free) submodules must be equal.  This completes the proof of 
Theorem~\ref{thm:S1onM4}. \hfill \qed

To assemble the equivariant cohomology of a complexity one space, we will need
a slightly more general form of Theorem~\ref{thm:S1onM4}.    
We consider 
a Hamiltonian $T$-action on a symplectic four-manifold $M$ which is the extension
of a Hamiltonian $S^1\acts M$ by a trivial action 
of a subtorus $K$ of codimension one.  This forces the fixed point set
$M^T$ to consist of isolated points and up to two surfaces.  We still have 
the parameters associated to the decorated graph
described in Section~\ref{se:dec graphs} for  the Hamiltonian $T/K$-action.

\begin{Proposition}\label{prop:T on M4}
Let $M$ be a closed connected symplectic four-manifold. Let a torus $T$ act 
non-trivially in a Hamiltonian fashion on $M$, 
and suppose that a codimension one subtorus $K\subset T$ acts
trivially.  Let $\pi_K: H_T^*(-;R)\to H_K^*(-;R)$ be the restriction
map in equivariant cohomology.
\begin{enumerate}
\item[(A)]
The equivariant cohomology,
$H_{T}^*(M;\Z)$ is a free $H_{T}^*(\pt;\Z)$-module
isomorphic to $H^*(M;\Z)\otimes H_{T}^*(\pt;\Z)$.

\item[(B)]
The inclusion $i:M^{T}\hookrightarrow M$ induces an injection in 
integral equivariant cohomology
$$
i^*:H^*_{T}(M;\Z) \hookrightarrow H^*_{T}\left(M^{T};\Z\right).
$$

\item[(C)]
In equivariant cohomology with rational coefficients, the image of $i^*$ is characterized
as those classes  $\alpha\in H^*_{T}\left(M^{T};\Q\right)$ which satisfy:
\begin{enumerate}
\item[(1)] $\pi_K(\alpha|_{F_i}) = \pi_K(\alpha|_{F_j})$ for all components 
$F_i,F_j$ of $M^T$; and

\item[(2)] the ABBV relation
\begin{equation}
 \sum_{{F\subset M^{T}}}\left({\pi|_F}\right)^{!}\left( \frac{ \alpha|_F}{e_{T/K}(\nu(F\subseteq M))}\right) \in H_T^*(\pt;\Q),
\end{equation}
where the sum is taken over connected components $F$ of the fixed point set $M^{T}$, 
and the equivariant Euler 
classes are taken with respect to the effective $T/K$-action.
\end{enumerate}

\item[(D)]  The Atiyah-Bredon sequence
for $T\acts M$
is exact over $\Z$.

\item[(E)]
The map \eqref{surjfiber}
is a surjection $H_{T}^{*}(M;\Z) \to H^{*}(M;\Z)$.  
\end{enumerate}
\end{Proposition}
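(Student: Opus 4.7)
Because $K$ acts trivially on $M$ we have $M^T = M^{T/K}$, and the action factors through the effective Hamiltonian action $T/K \cong S^1 \acts M$. The plan is to reduce both parts of the proposition to Theorem~\ref{thm:S1onM4} via a K\"unneth splitting. Choose a splitting $T \cong (T/K) \times K$; since $K$ acts trivially the Borel construction decomposes as
\[
(M \times ET)/T \;\simeq\; \bigl((M \times E(T/K))/(T/K)\bigr) \times BK.
\]
Because $H^*(BK;\Z) \cong \Z[v_1,\dots,v_{k-1}]$ is torsion free, the K\"unneth theorem gives natural isomorphisms
\[
H^*_T(-;R) \;\cong\; H^*_{T/K}(-;R) \otimes_R H^*(BK;R)
\]
when evaluated on $M$ or on $M^T = M^{T/K}$, under which $i^*_T$ decomposes as $i^*_{T/K} \otimes \Id$. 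Part (A) then follows immediately, by tensoring the injection of Theorem~\ref{thm:S1onM4}(A) with the flat $\Z$-module $H^*(BK;\Z)$.

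For Part (B), I would match the conditions of the proposition against those of Theorem~\ref{thm:S1onM4}(B) term by term in the K\"unneth splitting. Writing $\alpha \in H^*_T(M^T;\Q)$ uniquely as $\alpha = \sum_J \alpha_J \otimes v^J$ with $\alpha_J \in H^*_{T/K}(M^{T/K};\Q)$, the goal is to show that $\alpha$ satisfies conditions (1) and (2) of the proposition if and only if each $\alpha_J$ satisfies conditions (0), (1), and (2) of Theorem~\ref{thm:S1onM4}; the conclusion then follows from that theorem applied termwise. Under the identification $H^*_T(pt;\Q) \cong \Q[u,v_1,\dots,v_{k-1}]$, the restriction $\pi_K$ corresponds to setting $u=0$: on an isolated fixed point $p$ it recovers the ordinary degree-zero part of $\alpha_J|_p$, and on a fixed surface $\Sigma$ it recovers the part lying in $H^*(\Sigma;\Q) \otimes H^*(BK;\Q)$. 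Under the canonical identifications $H^0(F;\Q) \cong \Q$ and (in the positive genus case) the natural $H^1(\Sigma_{min};\Q) \cong H^1(\Sigma_{max};\Q)$ used in the proof of Theorem~\ref{thm:S1onM4}(B), requiring all $\pi_K(\alpha|_F)$ to agree translates, coefficient-by-coefficient in $v^J$, into conditions (0) and (1) of Theorem~\ref{thm:S1onM4} for each $\alpha_J$.

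For the ABBV condition: since $K$ acts trivially, each normal bundle $\nu(F \subset M)$ is a $T/K$-representation bundle, so $e_{T/K}(\nu(F \subset M)) \in H^*_{T/K}(F;\Q)$ and the $H^*(BK;\Q)$-tensor factor passes through the equivariant pushforward. The ABBV expression in condition (2) of the proposition therefore factors as
\[
\sum_J \Bigl(\sum_{F \subset M^T} \pi^F_*\!\left(\tfrac{\alpha_J|_F}{e_{T/K}(\nu(F \subset M))}\right)\Bigr) \otimes v^J,
\]
and lies in $H^*_T(pt;\Q) = \Q[u] \otimes H^*(BK;\Q)$ if and only if each bracketed sum lies in $\Q[u]$, which is exactly condition (2) of Theorem~\ref{thm:S1onM4} applied to $\alpha_J$.

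The main obstacle is the notational bookkeeping for condition (1) of the proposition: because the spaces $H_K^*(F_i;\Q)$ attached to different components have different topological type (point versus surface, two surfaces of equal genus), one must specify carefully what ``$\pi_K(\alpha|_{F_i}) = \pi_K(\alpha|_{F_j})$'' means and verify that it matches conditions (0) and (1) of Theorem~\ref{thm:S1onM4} rather than being strictly stronger or weaker. With those identifications pinned down exactly as in that theorem, the proposition reduces cleanly via the K\"unneth decomposition.
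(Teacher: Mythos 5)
Your proof is correct, and it takes a cleaner route than the paper, which disposes of this proposition in one line: ``the proof is identical to the proof of Theorem~\ref{thm:S1onM4},'' i.e.\ the authors intend for the reader to re-run the entire Morse-theoretic rank count and constraint count with $H_T^*(pt;\Q)=\Q[u,v_1,\dots,v_{k-1}]$ in place of $\Q[u]$. Your K\"unneth decomposition $(M\times ET)/T\simeq \bigl((M\times E(T/K))/(T/K)\bigr)\times BK$ makes that ``identical'' precise and does better: it reduces both parts formally to the already-proven circle case (Part (A) by flatness of $H^*(BK;\Z)$, Part (B) by working coefficient-by-coefficient in $v^J$), so nothing from Section~\ref{se:main thm} needs to be repeated. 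The termwise matching of the ABBV condition is right, since the $v^J$ are linearly independent over $\Q[u,u^{-1}]$.

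One point you flag but should pin down, because a wrong choice breaks the equivalence: the interpretation of condition (1). If you read $\pi_K(\alpha|_{F_i})=\pi_K(\alpha|_{F_j})$ naively as ``the full $u=0$ parts agree under an identification $H^*(F_i;\Q)\cong H^*(F_j;\Q)$ in all degrees,'' then in degree two it would force the $H^2(\Sigma_{min};\Q)\otimes 1$ and $H^2(\Sigma_{max};\Q)\otimes 1$ components of each $\alpha_J$ to coincide. That is strictly stronger than conditions (0) and (1) of Theorem~\ref{thm:S1onM4}, and it would over-cut the image: Lemma~\ref{le:relations} allots exactly one degree-two relation, which is already spent on the ABBV relation \eqref{eq:abbv relation}. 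The paper's own gloss resolves this by decreeing that condition (1) means $\alpha|_{F_i}-\alpha|_{F_j}$ is a multiple of $1\otimes\tau$ with $\tau\in H^2_{T/K}(pt;\Q)$, so that it only constrains degrees zero and one. With that reading (and the same identification $H^1(\Sigma_{min};\Q)\cong H^1(\Sigma_{max};\Q)$ already implicit in Theorem~\ref{thm:S1onM4}), your coefficientwise translation to conditions (0) and (1) is exactly right and the reduction closes.
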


\noindent The proof is identical to the proof of Theorem~\ref{thm:S1onM4}.
Parts (A) and (B) follow word-for-word as above, using $\Phi$ as a Morse-Bott function.
For part (C), the conditions (0) and (1) in  Theorem~\ref{thm:S1onM4} have become the single 
more compact form (1) in this generalization.  Writing the conditions in this way
is equivalent to saying that $\alpha|_{F_i}-\alpha|_{F_j}$ is in $\ker(\pi_K)$.
This boils down to requiring that $\alpha|_{F_i}-\alpha|_{F_j}$ is a multiple
of $1\otimes \tau$, where $\tau$ is a generator for the dual of the Lie algebra
of $T/K$.  Since the generator $\tau$ is an element of $H_{T/K}^2(\pt;\Q)$, 
this imposes the requirement
in degrees zero and one that the classes $\alpha|_{F_i}$ and $\alpha|_{F_j}$
agree.
For part (D), the fact that a codimension one subtorus $K\subset T$ acts
trivially guarantees that there are only $0$- and $1$-dimensional orbits, so the
result again follows in a straight forward way from  \cite[Lemma 4.1]{Franz-Puppe:2007}. 
Part (E) is then an immediate consequence of (D).

\begin{Remark}
When the fixed point set consists of isolated points, items (B) and (C) in Theorem~\ref{thm:S1onM4}
coincide with \cite[Proposition~3.1]{Goldin-Holm:2001}, and items (B) and (C) in Proposition~\ref{prop:T on M4}
coincide with \cite[Proposition~3.2]{Goldin-Holm:2001}.  In the case when there are
fixed surfaces, even with genus $0$, our results are already providing new calculations.  
\end{Remark}

\section{The equivariant cohomology of complexity one spaces} \label{se:complexity one}

Recall that a { complexity one space} is a symplectic manifold equipped 
with an effective Hamiltonian action of a torus which is one dimension
less than half the dimension of the manifold.  That is, the torus is one 
dimension too small for the action to be toric.
These manifolds have been classified in terms of combinatorial 
and homotopic data, together called a {\bf painting},
by Karshon and Tolman \cite{KT:2014}.
We begin with a Lemma that establishes
that the stabilizer groups for complexity one spaces must be of the form, a torus cross a cyclic group.

\begin{Lemma}\label{lem:local normal form}
Let $T^{n-1}\acts M^{2n}$ be a complexity one space and $p\in M$.  
Then the stabilizer group $H$ of $p$ is a direct product $H_0\times \Z/k\Z$, where
$H_0$ is a connected torus (the connected component of the identity in $H$).
\end{Lemma}

\begin{proof}
Let $p\in M$ and let $H$ denote the stabilizer of $p$.  The local normal form theorem \cite{GS-normal:1984,Marle:1985} 
for momentum maps guarantees that there is a neighborhood of the orbit $T\cdot p$ that is isomorphic to an open neighborhood
of $[\mathds{1}:0:0]$ in $T\times_H \C^{\ell} \times \mathfrak{ann(h)}$ where $h$ is the (real) dimension of $H$. Counting
dimensions, we must have
$$
2n = (n-1) + 2\ell - h +(n-h-1).
$$
This forces $\ell = h+1$ and means that $H\hookrightarrow (S^1)^\ell= (S^1)^{h+1}$.
This means that there is a short exact sequence
$$
1\to H \to (S^1)^{h+1} \to S^1 \to 1.
$$
Now let $H_0$ denote the connected component of $H$ containing the identity.  
We can quotient the first two terms in the short exact sequence by $H_0$ to get
$$
1\to H/H_0 \to (S^1)^{h+1}/H_0 \to S^1 \to 1.
$$
The fact that $H/H_0$ is finite means that $(S^1)^{h+1}/H_0$ is a finite cover of a circle, so it is a circle itself.  Thus,
$H/H_0$ is a finite subgroup of a circle, so $H/H_0$ is cyclic.

We now want to show that $H= H_0\times H/H_0$.  Suppose that $H/H_0\cong \Z/k\Z$ and choose an element $a\in H$ that
generates $H/H_0$.  Then $a^k\in H_0$.

Now, $H_0$ is a connected abelian group so it must be isomorphic to a torus $(S^1)^h$.  In a torus, every element has a
$k^{\mathrm{th}}$ root.  Choose a $k^{\mathrm{th}}$ root $b\in H_0$ of $a^k$.  That is, $a^k = b^k$.  But now, $ab^{-1}$
generates a cyclic subgroup of $H$ of order $k$ complementary to $H_0$.  That is,
$H \cong H_0\times \langle ab^{-1}\rangle$, which is what we wanted to show.
\end{proof}

We now consider a complexity one space $T^{n-1}\acts M^{2n}$.
For any (closed, connected) subtorus $T^k\subset T^{n-1}$, the set of fixed points
$M^{T^k}$ is a symplectic submanifold with components of dimension 
at most $2n-2k$.  
Depending on how generic $T^k$ is inside of $T^{n-1}$, some components of $M^{T^k}$ could be 
fixed by all of $T^{n-1}$.
If there is a component $X$ for which $(T^{n-1}/T^k)\acts X$ is an effective action, that component $X$ 
must have dimension at least $2n-2k-2$.  
That is, $(T^{n-1}/T^k)\acts X$ will be either toric or complexity one itself.
In particular, the components of the
one-skeleton are two-dimensional or four-dimensional.

Tolman and Weitsman considered the inclusion of the fixed points 
$j: M^{T} \to M_{(1)}$, which is an equivariant map. The following theorem is a consequence
of work by Chang and Skjelbred \cite{CS:1974}.  Tolman and Weitsman give an elementary 
geometric proof.
\begin{Theorem}[Tolman-Weitsman \cite{TW-hamTsp:1999}]\label{thm:TW}
Let $M$ be a closed Hamiltonian $T$-space.
The induced maps in equivariant cohomology with rational coefficients,
$$
\xymatrix{
H_T^*(M;\Q)_{\phantom{BB}} \ar@{^(->}[rd]_(0.4){i^*} & & H_T^*(M_{(1)};\Q)\ar[ld]^(0.4){j^*}\\
 & H_T^*(M^T;\Q) & \\ 
}
$$
have the same image in $H_{T}^*(M^T;\Q)$.
\end{Theorem}
In other words, for a tuple of equivariant
classes on the fixed point components to be in 
the image of $i^*$, we only need to ensure that the tuple 
is a global class on each component of the one-skeleton.
Combining Theorem~\ref{thm:TW}
with our result in Proposition~\ref{prop:T on M4} 
gives the following combinatorial description of the equivariant cohomology
of a complexity one space.

\begin{Corollary}\label{thm:complexity one}
Let $T^{n-1}\acts M^{2n}$ be a closed connected complexity one space.
\begin{enumerate}
\item[(A)]
The equivariant cohomology
$H_{T}^*(M;\Z)$ is a free $H_{T}^*(\pt;\Z)$-module
isomorphic to $H^*(M;\Z)\otimes H_{T}^*(\pt;\Z)$.

\item[(B)]
The inclusion $i:M^{T}\hookrightarrow M$ induces an injection in 
integral equivariant cohomology
$$
i^*:H^*_{T}(M;\Z) \hookrightarrow H^*_{T}\left(M^{T};\Z\right).
$$

\item[(C)]
In equivariant cohomology with rational coefficients, the image of $i^*$ is characterized
as those classes  $\alpha\in H^*_{T}\left(M^{T};\Q\right)$ which satisfy,
for every codimension one subtorus $K\subset T$ and every connected component
$X$ of $M^K$,
\begin{enumerate}
\item[(1)] $\pi_K(\alpha|_{F_i}) = \pi_K(\alpha|_{F_j})$ for all components 
$F_i,F_j$ of $X^T$; and

\item[(2)] when $\dim(X)=4$, the ABBV relation,
\begin{equation}
 \sum_{{F\subset X^{T}}} \left({\pi|_{F}}\right)^{!}\left( \frac{ \alpha|_F}{e_{T/K}(\nu(F\subseteq X))}\right) \in H_T^*(\pt;\Q),
\end{equation}
where the sum is taken over connected components $F$ of the fixed point set $X^{T}$, 
and the equivariant Euler 
classes are taken with respect to the effective $T/K$-action.
\end{enumerate}

\item[(D)]  The Atiyah-Bredon sequence
for $T\acts M$
is exact over $\Z$. 

\item[(E)]
The map \eqref{surjfiber}
is a surjection $H_{T}^{*}(M;\Z) \to H^{*}(M;\Z)$.  
\end{enumerate}

\end{Corollary}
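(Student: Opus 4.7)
The plan is to reduce the corollary to Proposition~\ref{prop:T on M4} applied locally on each connected component of the one-skeleton, using Tolman--Weitsman's Theorem~\ref{thm:TW} to localize from $M$ to $M_{(1)}$.

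For Part (A), the fixed set $M^T$ is a disjoint union of symplectic, hence orientable, submanifolds of $M$. Consequently $H^*(M^T;\Z)$ is torsion-free, and hence so is $H^*_T(M^T;\Z) \cong H^*(M^T;\Z)\otimes H^*_T(pt;\Z)$. Theorem~\ref{thm:ker coker} then gives the injection $i^* : H^*_T(M;\Z) \hookrightarrow H^*_T(M^T;\Q)$ directly.

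For Part (B), by Theorem~\ref{thm:TW}, a class $\alpha \in H^*_T(M^T;\Q)$ lies in the image of $i^*$ if and only if it lies in the image of $j^* : H^*_T(M_{(1)};\Q) \to H^*_T(M^T;\Q)$. Writing $M_{(1)}$ as the union, over codimension-one subtori $K\subset T$, of the connected components $X$ of $M^K$, the class extends over $M_{(1)}$ iff for every such $(K,X)$ the tuple $\alpha|_{X^T}$ extends to a class in $H^*_T(X;\Q)$. So I first need to identify, for each such $X$, the characterization of $i_X^*(H^*_T(X;\Q))$ inside $H^*_T(X^T;\Q)$. Each $X$ is a compact symplectic submanifold on which $T/K \cong S^1$ acts effectively and Hamiltonianly. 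Because $T/K$-orbits in $X$ are at most one-dimensional, the $T/K$-isotropic subbundle argument that opens the paper forces $\dim(X) \in \{2,4\}$. When $\dim(X)=2$, $X$ is either a $T/K$-rotated sphere with two $T$-fixed points or a component of $M^T$ itself, and the image of $i_X^*$ is cut out precisely by the equality condition (1) (this is the standard $S^2$ computation in equivariant cohomology). When $\dim(X)=4$, Proposition~\ref{prop:T on M4} applies verbatim, yielding precisely conditions (1) and (2) as stated.

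The remaining step is to assemble the local conditions into the global statement: a tuple $\alpha = (\alpha|_F)_F$ indexed by the components of $M^T$ satisfies conditions (1) and (2) of the corollary for every codimension-one $K$ and every $X \subseteq M^K$ iff the restriction $\alpha|_{X^T}$ lies in $i_X^*(H^*_T(X;\Q))$ for every such $X$, iff $\alpha$ lies in the image of $j^*$, iff $\alpha \in i^*(H^*_T(M;\Q))$. I expect the main obstacle to be the bookkeeping in the two-dimensional case: one must verify that condition (1) alone (without an ABBV relation) suffices on the 2-spheres in the one-skeleton, which is a short direct check since $H^*_T(S^2;\Q)$ surjects onto the kernel of the difference of restrictions at the two fixed points. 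Once this case is handled, the four-dimensional case is exactly Proposition~\ref{prop:T on M4}, and the corollary follows by combining Theorem~\ref{thm:TW} with these local characterizations.
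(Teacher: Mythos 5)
Your proposal is correct and follows essentially the same route as the paper: a dimension count showing the components of the one-skeleton are two- or four-dimensional, Tolman--Weitsman's Theorem~\ref{thm:TW} to reduce membership in the image of $i^*$ to extendability over each such component, and Proposition~\ref{prop:T on M4} for the four-dimensional pieces (with the standard two-sphere computation, which the paper leaves implicit, handling the rest). The only caveats are cosmetic: in Part (A) the codomain should be $H^*_T(M^T;\Z)$, and torsion-freeness of $H^*(M^T;\Z)$ uses that the fixed components are points and orientable surfaces (orientability alone would not suffice in general), which is exactly the implicit reasoning in the paper's proof of Theorem~\ref{thm:S1onM4}(A).
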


\begin{proof}
Parts (A) and (B) hold for precisely the same reasons as (A) and (B) in Theorem~\ref{thm:S1onM4}.  The components of the fixed set
$M^{T}$ are still zero- and two-dimensional symplectic submanifolds of $M$ and are hence torsion free.
A generic component of the moment map $\Phi^\xi$ will still be a perfect Morse-Bott function with critical set $M^T$.
This will allow us to deduce that $H_{T}^*(M;\Z)$ is a free $H_{T}^*(\pt;\Z)$-module
isomorphic to $H^*(M;\Z)\otimes H_{T}^*(\pt;\Z)$   and $i^*:H_{T}^*(M;\Z)\to H_{T}^*(M^{T};\Z)$ is injective, as before.

Part (C) follows by using Proposition~\ref{prop:T on M4} to assemble the equivariant cohomology of the
one skeleton.

To verify Part (D), we use Lemma~\ref{lem:local normal form} to guarantee that the stabilizer group of any
point has at most one (finite) cyclic factor.  This means that $M$ satisfies condition (2.3b) in \cite[Theorem~2.1]{Franz-Puppe2:2011}. 
Because $H_T^*(M;\Z)$ is free over $H^*(BT;\Z)$,
the machinery in \cite{Franz-Puppe2:2011} now implies that the Atiyah-Bredon sequence is exact over $\Z$.

Finally, Franz and Puppe  \cite{Franz-Puppe2:2011} proved that (E) is an immediate consequence of (D).
\end{proof}

\noindent We conclude with an example of a complexity one space and indicate some of 
the computations our work allows.

\begin{figure}[!ht]
\centering
\includegraphics[width=0.95\textwidth]{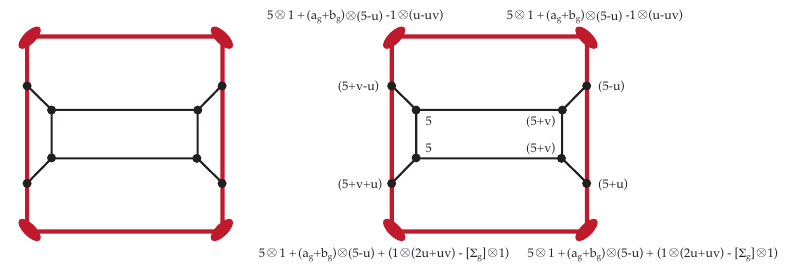} 
    \caption{
    On the left, the x-ray for a complexity one space for a $T^2$ acting on $M^6$. 
    The red fat vertices correspond to  genus $g$ surfaces fixed by $T$.  The
    black vertices correspond to isolated fixed points.  The red edges correspond to
    four-manifolds fixed by a circle, and the black edges correspond to $2$-spheres fixed
    by a circle. 
    This manifold has Betti numbers $\beta_0=\beta_6=1$, $\beta_1=\beta_5=g$, 
    $\beta_2=\beta_4=7$ and $\beta_3=2g$.
    On the right, a collection
of classes in $H^*_{T}(F)=H^*(F)[u,v]$ for each fixed component $F$.
These classes satisfy the requirements in Corollary~\ref{thm:complexity one},
so they are the restrictions to the fixed sets of a global class in
$H_{T}^*(M;\Q)$. }  
\label{fig:complexity one example}
\end{figure}

  \appendix

\section{Computing the intersection numbers  of gradient spheres as a consequence of the ABBV relation}

In principle, the ABBV relation applies when working with coefficients in a field.  However,  we
may still use ABBV to deduce information about integral classes and their cup products as follows.
For a four-manifold, the intersection form is an invariant of integral homology.  In the presence of a group action,
it is the shadow of an equivariant invariant for invariant submanifolds. We may compute the self-intersection 
of an embedded invariant surface by
using ABBV on an integral class (which is, after all, also rational), finding the equivariant invariant
which is a priori rational, but for an integral class is actually an integer.

Let $S$ be an embedded invariant surface in a closed Hamiltonian $S^1$-space $M$ of dimension four and $[S]$ its class in $H_{2}(M;\Z)$.
Then  
 \begin{eqnarray} \label{eq:inter2}
 [S] \cdot [S]&:=&\pi^{!}(\PD([S]) \cup \PD([S]))\\ \nonumber
                            &=&   \pi^{!} \Big({I^{*}}|_{H_{S^1}^{0}(M)}({\iota_{S}}^{!}(1) \cup {\iota_{S}}^{!}(1)) \Big)\\ \nonumber
                   &=& {I^{*}}|_{H_{S^1}^{0}(\pt)} \Big( \pi^{!}({\iota_{S}}^{!}(1) \cup {\iota_{S}}^{!}(1)) \Big) \\ \nonumber
                   &=& \pi^{!} \Big({\iota_{S}}^{!}(1) \cup {\iota_{S}}^{!}(1) \Big) \\ \nonumber
                   &=& \sum_{F \subset M^{S^1}}{\pi|_{F}}^{!} \Big(\frac{({\iota_{S}}^{!}(1)|_{F})^2 }{e_{S^1} (\nu(F \subset M))} \Big)\\ \nonumber
                 \end{eqnarray}
where $\PD$ stands for Poincar\'e dual, 
 the cup product $\cup$  in the first row is in standard cohomology and the following are in equivariant cohomology, the pushforward $\pi^{!}$ in the first two rows are in standard cohomology and the following are in equivariant cohomology; $I^*$ is the map induced by the fiber inclusion $I \colon N \to {{(N \times ES^1)}/{S^1}}$ for $N=M$ and $N=\pt$. The third equality is
 since, by definition of the equivariant pushforward map, the diagram of morphisms  
  \begin{equation} \label{dic}
\begin{CD}
H_{S^1}^{*}(M) @> \pi^{!} >> H_{S^1}^{*-4+0}(\pt)\\
@V I^{*} VV @V I^{*} VV\\
H^{*}(M) @> \pi^{!} >> H^{*-4+0}(\pt)
\end{CD}
\end{equation}
is commutative. 
  The second and fourth equalities are  since the restriction of 
 $ {I^{*}}$ to $H_{S^1}^{0}(N)$ is an isomorphism onto $H^{0}(N)$: one-to-one since the intersection $\ker I^{*} \cap H_{S^1}^{0}(N)=\langle \pi^{*}(u) \rangle \cap H_{S^1}^{0}(N)=\{0\}$; onto by Ginzburg's theorem \ref{thm:module} over $\Q$ and by item (C) in Theorem \ref{thm:S1onM4} over $\Z$.
The last equality is by ABBV \eqref{eq:abbv}. 
 
 \begin{noTitle} \label{grad}
For an $S^1$-invariant $\omega$-compatible structure $J$, the pair $(J,\omega)$  determines an $S^1$-invariant Riemannian metric  $\langle \cdot, \cdot \rangle:=\omega(\cdot,J\cdot)$. We call such a metric {\bf compatible}.
The gradient vector field of the moment map with respect to a compatible metric, characterized by $\langle v,\grad \Phi \rangle=d \Phi(v)$, is
\begin{equation}\label{gradient}
\grad \Phi = -J \xi_M,
\end{equation}
where
 $\xi_M$ is the vector field that generates the $S^1$ action.
The vector fields $\xi_M$ and $J\xi_M$ generate a $\C^\times = (S^1)^\C$ action.
The closure of a non-trivial $\C^\times$ orbit is a sphere, called a {\bf gradient sphere}. On a gradient sphere, $S^1$ acts by rotation with two fixed points at the north and south poles; all other points on the sphere have the same stabilizer. 
We say that a gradient sphere is {\bf free} if its stabilizer is trivial; otherwise it is {\bf non-free}.
 In a compatible metric on $S^1 \acts (M^4,\omega)$, every non-free gradient sphere is a {\bf $\Z_\ell$-sphere} for some $\ell>1$, i.e.,
a connected component of the closure of the set of points in $M$ whose stabilizer is equal to the cyclic subgroup of $S^1$ of order $\ell$, and every $\Z_{\ell}$-sphere is a gradient sphere \cite[Lemma 3.5]{Karshon:1999}.
Note that there is an isotropy weight $\ell$ at the south pole of the $\Z_\ell$ sphere,
and weight 
$-\ell$ at the north pole.
\end{noTitle}

Let $S$ be a gradient sphere with respect to a compatible metric in a Hamiltonian $S^1 \acts (M^4,\omega)$, and $p$ and $q$ its north pole and south pole.
Assume that 
\begin{itemize}
\item if $p$ is an isolated fixed point, then there is a gradient sphere $S_{+}$ such that $$
p \mbox{ is the } \begin{cases}
\text{south pole of }S_{+} & \text{ if }p \text{ is interior},\\
\text{north pole of }S_{+} &\text{ if }p \text{ is maximal}
\end{cases};$$
\item if $q$ is 
an isolated fixed point then there is a gradient sphere $S_{-}$ such that 
$$
q \mbox{ is the } \begin{cases}
\text{north pole of }S_{-} &\text{ if }q \text{ is interior},\\
\text{south pole of }S_{-} & \text{ if }q \text{ is minimal}
\end{cases}.$$
\end{itemize}
If $p$ is on the maximal surface $\Sigma_{\max}$ set $S_{+}=\Sigma_{\max}$, 
and if $q$ is on the minimal surface $\Sigma_{\min}$ set $S_{-}=\Sigma_{\min}$.
Note that if $S$ is a gradient sphere whose image under the moment map is an edge in the decorated graph associated to the Hamiltonian $S^1$-space in \cite{Karshon:1999}, then the above assumptions hold.

Denote by $\ell$ the order of the stabilizer of $S$; set $\ell=1$ if $S$ is a free gradient sphere. If $S_{+}$ is a gradient sphere denote by $\ell_{+}$ the order of its stabilizer; set $\ell_{+}=1$ if $S_{+}$ is free. If $S_{+}$ is a fixed surface set $\ell_{+}=0$. Similarly denote 
$\ell_{-}$.
The normal bundle of $S$ can be viewed as an equivariant complex line bundle over $S^2$ \cite[Corollary A.6]{Karshon:1999}; the circle action is linear on the fibers over the north and south poles with weights
\begin{equation} \label{eq:weights}
 (-1)^{\delta_{p=\max}}\ell_{+} \text{ at }p; \,\, (-1)^{\delta_{q\neq\min}}\ell_{-} \text{ at }q.
 \end{equation}
If $p$ is an isolated fixed point then $TM_{p}$ splits as the normal bundle to $S$ and the normal bundle to $S_{+}$ with weights $(-1)^{\delta_{p=\max}}\ell_{+}$ and $-\ell$. Similarly the weights at $q$ are $\ell$ and $(-1)^{\delta_{q\neq\min}}\ell_{-}$ if $q$ is an isolated fixed point. If  $p$ ($q$) is on a fixed surface $\Sigma_{*}$ ($*=\max$ for $p$ and $\min$ for $q$) then the weights are $0,-1$ (respectively $1,0$), as explained in \S \ref{weights}. 
 
 For a connected component $F$ of $M^{S^1}$ we have 
 $${{\iota_{S}}^{!}(1)|_{F}}:=\iota^{*}_{F  \hookrightarrow M} \circ \iota^{!}_{S \hookrightarrow M}(1_{S}^{S^1})=\iota^{!}_{F \cap S \hookrightarrow F} \circ \iota^{*}_{F \cap S \hookrightarrow S}(1_{S}^{S^1}).$$
 In particular, if $S$ and $F$ do not intersect then ${{\iota_{S}}^{!}(1)|_{F}}=0$.
If $F=\{p\}$ then the restriction ${{\iota_{S}}^{!}(1)|_{F}}=\iota^{!}_{\{p\} \hookrightarrow \{p\}} \circ \iota^{*}_{\{p\} \hookrightarrow S}(1_{S}^{S^1})$ is the equivariant Euler class of the complex one-dimensional normal $S^1$-representation of $S$ at $p$, hence, by \eqref{eq:weights} and  \eqref{eq euler0} (with $n=1$),
 ${\iota_{S}}^{!}(1)|_{F}=-(-1)^{\delta_{p=\max}} \ell_{+} t$. Similarly, if $F=\{q\}$ then ${\iota_{S}}^{!}(1)|_{F}=-(-1)^{ \delta_{q\neq\min}}\ell_{-} t$. 
 If $F$ is a fixed surface and $F$ intersects $S$ at $p$ (at $q$) then $\ell_{+}=0$ ($\ell_{-}=0$) and so is  ${\iota_{S}}^{!}(1)|_{F}$.

Therefore, 
 in case $p$ and $q$ are isolated fixed points, 
\eqref{eq:inter2} and \eqref{eq:eu-11} imply that \begin{equation} \label{first}
[S] \cdot [S]=\begin{cases}
\frac{\ell_{+}^2 t^2}{-\ell_{+} \cdot  \ell t^2}+ \frac{\ell_{-}^2 t^2}{- \ell \cdot \ell_{-} t^2}
                   =\frac{-\ell_{+}-\ell_{-}}{\ell} & \text{ if $p$ and $q$ are interior }\\
                  \frac{\ell_{+}-\ell_{-}}{\ell}   & \text{ if $p$ is $\max$ and $q$ is interior} \\
          \frac{-\ell_{+}+\ell_{-}}{\ell}           & \text{ if $p$ is interior and $q$ is $\min$}\\
                \frac{\ell_{+}+\ell_{-}}{\ell}     & \text{ if  $p$ is $\max$ and $q$ is $\min$}\\
\end{cases}.
\end{equation} 
The first case in \eqref{first} is proven in \cite[Lemma 5.2]{Karshon:1999} by a different proof (not using ABBV).
Moreover,
\begin{equation}
[S] \cdot [S]=\begin{cases}
\frac{-\ell_{-}}{\ell}& \text{ if $p \in \Sigma_{\max}$ and $q$ is isolated interior }\\
 \frac{\ell_{-}}{\ell} & \text{ if $p \in \Sigma_{\max}$ and $q$ is isolated $\min$}\\
       \frac{-\ell_{+}}{\ell}              & \text{ if $p$ is isolated interior and $q \in \Sigma_{\min}$} \\
           \frac{\ell_{+}}{\ell}              & \text{ if $p$ is isolated $\max$ and $q \in \Sigma_{\min}$}\\
        0             & \text{ if  $p \in \Sigma_{\max}$ and $q \in \Sigma_{\min}$}\\
\end{cases}.
\end{equation}

We note that, by a similar argument, if $S$ and $S'$ are gradient spheres with respect to a compatible metric, and $S \neq S'$, then $[S] \cdot [S']$ is zero if $S \cap S' =\emptyset$ or $S \cap S'$ is a point on a fixed surface and one if $S \cap S'$ is an isolated fixed point.

\end{document}